\newcommand{\suchthat}{\;\ifnum\currentgrouptype=16 \middle\fi|\;}
\newcommand{\scirc}{\raise1pt\hbox{$\,\scriptstyle\circ\,$}}
\newcommand{\R}{\mathbb{R}}
\newcommand{\N}{\mathbb{N}}
\newcommand{\Z}{\mathbb{Z}}
\newtheorem{assumption}{Assumption}
\DeclareSymbolFont{bbold}{U}{bbold}{m}{n}
\DeclareSymbolFontAlphabet{\mathbbold}{bbold}
\newcommand{\vect}[1]{\mathbbold{#1}}
\crefname{hypothesis}{Hypothesis}{Hypotheses}
\newcommand{\TheTitle}{On small-time local controllability} 
\newcommand{\TheAuthors}{Saber Jafarpour}
\headers{\TheTitle}{\TheAuthors}
\title{{\TheTitle}}
\author{
  Saber Jafarpour\thanks{Department of Mechanical Engineering and
    Center for Control, Dynamical Systems, and Computations at 
    University of California, Santa Barbara
    (\email{saber.jafarpour@engineering.ucsb.edu}).}}
\begin{document}

\maketitle

\begin{abstract}
In this paper, we study small-time local controllability of real
analytic control-affine systems under small perturbations of their vector fields. Consider a real analytic control
system $\mathcal{X}$ which is small-time locally controllable and whose reachable sets shrink with the polynomial rate of
order $N$ with respect to time. We will prove a general theorem which
states that any real analytic control-affine system whose vector fields are perturbations of the vector fields
of $\mathcal{X}$ with polynomials of order higher than $N$ is again
small-time locally controllable. In particular, we show that this result
connects two long-standing open conjectures about small-time local
controllability of systems. 
\end{abstract}

\begin{keywords}
Small-time local controllability, control variations, reachable sets,
real analytic systems. 
\end{keywords}

\begin{AMS}
93B03, 93B05, 93C10
\end{AMS}

\section{Introduction}Controllability is one of the central concepts
in mathematical control theory. For linear control systems, 
the notion of controllability has been first introduced and studied by
Kalman~\cite{kalman1960contributions}. Based on the state-space
approach, Kalman characterized controllability using what is now known as the Kalman rank condition~\cite{kalman1960contributions,REK:63}. For nonlinear
systems, various notions of controllability have been introduced and studied in
the literature~\cite{HJS:1983}. Among these notions, small-time local controllability is arguably the most fundamental
one. A control system is small-time locally controllable from a point
if a neighborhood of that point can be reached in small times (a
rigorous definition will be given later in the paper). If one can
compute all the trajectories of the system, then it is easy to study the small-time local
controllability. However, a
full analytic description of trajectories of
a control system requires solving a large number of nonlinear
differential equations, which is generally very difficult, if not impossible. 

In past few decades, different approaches have been developed to
study the fundamental properties of small-time locally controllable
systems using their vector fields. The essence of most of these approaches is to provide answers
to two fundamental questions: i) how much pointwise information about the vector
fields of the system is needed to completely characterize small-time local
controllability of the system?, and ii) how is the asymptotic
behaviour of the reachable sets of the system for small times?  It turns out
that these two questions are closely connected and the answers to them would
shed some light on other important questions in mathematical control
theory~\cite{AA:1999, RMB-GS:90,MK:06}. Despite a large body
of literature on this topic, for general control systems, the above questions are still unanswered.

\paragraph*{Literature review} In control literature, various
framework have been proposed for studying nonlinear control
systems~\cite{JPA-ACL:1984, jurdjevicgeometric1996, JCW:1979}. It
turns out that the geometric control theory is one of the suitable
settings for studying controllability of systems. In geometric control
theory, a control system is defined as a \emph{parametrized} family of
vector fields on a \emph{manifold}, where the parameters are the controls and
the manifold is the state space of the
system~\cite{jurdjevicgeometric1996}. For $\nu\in
\mathbb{Z}_{\ge 0}\cup\{\infty,\omega\}$, a $\mathrm{C}^{\nu}$ control-affine
system is defined as a pair $(\mathcal{X},\mathfrak{C})$, where
$\mathcal{X}=\{X_0,X_1,\ldots,X_m\}$ is a family of
$\mathrm{C}^{\nu}$ vector fields on $\R^n$ and $\mathfrak{C}\subseteq
\R^m$ is a control set such that $\vect{0}_m\in \mathfrak{C}$. A trajectory for the control-affine system $(\mathcal{X},\mathfrak{C})$ is
an absolutely continuous curve $x:[0,T]\to \R^n$ such that 
\begin{align*}
\dot{x}(t)= X_0(x(t))+\sum_{i=1}^{m}u_{i}(t)X_i(x(t)),\quad \mbox{
  for almost every } t\in [0,T],
\end{align*}
for some measurable controls $u_1,u_2,\ldots,u_m:[0,T]\to
\mathfrak{C}$. In this paper, we study the system around an equilibrium point $x_0\in
\mathbb{R}^n$, i.e., a point $x_0$ satisfying
$X_0(x_0)=\vect{0}_n$. For a time $t\in \R_{\ge 0}$, the reachable set of $(\mathcal{X},\mathfrak{C})$
form $x_0$ for time less than or equal to $t$, which is denoted by
$\mathrm{R}_{\mathcal{X}}(\le t,x_0)$, is the set of points in state space
$\R^n$ which can be reached by traveling along the trajectories of the
vector fields in $\mathcal{X}$ for positive times less than $t$. More
precisely, 
\begin{align*}
\mathrm{R}_{\mathcal{X}}(\le t,x_0)=\{x(T)\mid x:[0,T]\to \R^n \mbox{
                                      is a trajectory of }
                                      \mathcal{X}, \ x(0)=x_0, \ T\le t\}.
\end{align*}

Among different notions of controllability proposed in the literature,
small-time local controllability is arguably the most fundamental
one. A control system $\mathcal{X}$ is small-time locally controllable
(STLC) from a point $x_0$ if, for every $t>0$, the reachable set
$\mathrm{R}_{\mathcal{X}}(\le t, x_0)$ contains a neighborhood of
$x_0$. Different approaches have been proposed in the literature for characterizing
small-time local controllability using the local
information of the vector fields of the system. The essence of most of
these approaches can be explained using the fundamental result of Nagano~\cite{TN:1966}, which connects
the diffeomorphism invariant properties of a system to the Lie
algebra of its the vector fields (cf.~\cite{YK-TO:16} for an alternative approach to
study small-time local controllability). Using these approaches,
small-time local controllability of systems has been studied in the literature and many
sufficient conditions (cf.~\cite{RH:ADL:2004,AK:1974,sussmanna1978,sussmannlie1983,sus1987})
as well as some necessary conditions (cf.~\cite{MK:1987,MIK:1998,GS:1986,sus1987}) have been
developed. Despite these deep results, in general, the gap between the necessary
and sufficient controllability conditions is large and complete characterization of
small-time local controllability is only possible for some specific classes of
systems (cf.~\cite{COA-ADL:2012,NNP:1976,sussmancontrollability1972}). In what
follows we review some of these ideas and connect them with the
fundamental questions about small-time locally controllable systems.

One of the important notions of controllability which has a close connection with
small-time local controllability is local accessibility. A control system is locally accessible from $x_0$ if
the reachable sets of $\mathcal{X}$ starting from $x_0$ have nonempty
interiors for all positive times. It is clear that if a system is small-time locally controllable
from $x_0$, then it is locally accesible from $x_0$. However, the converse may
not be true~\cite[Example 7.1]{bullogeometric2004}. In 1972, Sussmann
and Jurdjevic characterized the local accessibility of real analytic control systems using the Lie brackets
of their vector fields at the point $x_0$~\cite[Corollary
4.7]{sussmancontrollability1972}. In 1974, Sussmann used an extension of Nagano's
Theorem~\cite{TN:1966} to show that the Lie brackets of the
vector fields of the system also play a crucial role in small-time local
controllability of systems~\cite{sussmann1974extension}. This result
motivated the search for sufficient controllability conditions in terms of Lie brackets of vector fields of the
system~\cite{sussmannlie1983, sus1987}. Later works in this direction
exploit suitable filterations of vector fields to find sharper necessary and sufficient conditions for
small-time local controllability~\cite{HH:91}.

One of the nice features of Sussmann and Jurdjevic's characterization for
local accessibility is that it can be checked using only \emph{finite} number of
differentiations of vector fields of the system at the point $x_0$. This
seemingly trivial observation raises the following important question about the nature
of small-time local controllability: is it possible to characterize small-time local
controllability of a given real analytic system using finite number of
differentiations of its vector fields at the point $x_0$? More precisely, this question can be 
formulated as the following conjecture (see~\cite{AA:1999}).
\begin{conjecture}\label{conj:2}
Given a real analytic control-affine system $\mathcal{X}=\{X_0,X_1,\ldots,X_m\}$ which is small-time locally
controllable from an equilibrium point $x_0$, there exists $N\in\N$ such
that, every real analytic control-affine system
$\mathcal{Y}=\{Y_0,Y_1,\ldots,Y_m\}$ with the property that, for every
$i\in \{0,1,2,\ldots,m\}$, the vector fields $Y_i$ and $X_i$ has the same Taylor polynomial of
order $N$ around $x_0$ is again small-time locally controllable from $x_0$.
\end{conjecture}

Another useful notion for studying small-time local
controllability is the control variation. A control variation can be considered as 
a high-order tangent to the reachable sets of the system which shows the admissible directions in
the reachable sets, i.e., for small times, one will stay inside the
reachable sets by traveling in these directions. By constructing a suitable family of control variations which generates all
the directions in $\R^n$ and using a suitable generalized open mapping theorem,
one can show that a control system is small-time locally
controllable (see, for example~\cite[Theorem 2.1]{HF:1989}). In the control literature,
many different families of control variations have been introduced for
studying small-time local controllability of systems
(cf.~\cite{COA-ADL:2012,bianchinicontrollability1993,
  HF:1987,HF:1989,MK:1990,AJK:1977,sus1987}). The essence of most of
these constructions is to use suitable switchings between vector fields of
the system. Control variations can also be used for studying the rate of growth of the
reachable sets of a system with respect to time. The
order of a control variation reveals how fast one can travel in the
reachable sets in that direction. More specifically, if one can get all
direction in $\R^n$ using families of control variations of order less
than equal to $N$, then there exists a positive constant $C>0$ such that
the closed ball centered at $x_0$ with radius $Ct^N$ (which we 
denote by $\overline{\mathrm{B}}(x_0,Ct^N)$) is contained in the
reachable set  $\mathrm{R}_{\mathcal{X}}(\le t,x_0)$, for small
positive times $t$~\cite[Theorem 2.1]{HF:1989}. This raises the following question: Given a small-time
locally controllable system, does there exist a family of control variations of order $N$ which can be used to prove small-time local controllability
of the system. Motivated by the above question, one can propose the the following conjecture (see~\cite{AA:1999}).
\begin{conjecture}\label{conj:1}
Let $\mathcal{X}$ be a real analytic control-affine system which is small-time
locally controllable from $x_0$. Then there exist $N\in \N$ and
$T,C>0$ such that 
\begin{equation*}
\overline{\mathrm{B}}(x_0,Ct^N)\subseteq \mathrm{R}_{\mathcal{X}}(\le t,
x_0),\qquad\forall t\le T.
\end{equation*}
\end{conjecture}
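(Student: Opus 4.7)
The plan is to reduce the statement to the variational criterion for polynomial growth recalled in the excerpt (Kawski, Corollary 2.5 of \cite{MK:1990}): if a family of control variations of orders at most $N$ spans $\R^n$ at $x_0$, then $\overline{\mathrm{B}}(x_0, Ct^N) \subseteq \mathrm{R}_{\mathcal{X}}(<t,x_0)$ for small $t$. It therefore suffices to extract from the STLC hypothesis alone a finite family of \emph{admissible} control variations of uniformly bounded order whose leading vectors span $\R^n$.

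The first step will be to invoke the Sussmann--Jurdjevic accessibility theorem: since $\mathcal{X}$ is STLC from $x_0$ it is a fortiori accessible, so the Lie algebra generated by $X_0, X_1, \ldots, X_m$ already spans $\R^n$ at $x_0$. Hence there is a finite $N_0$ such that brackets of length at most $N_0$ already span the tangent space at $x_0$, and by the Hermes--Sussmann--Kawski calculus each such bracket appears as the leading term of a control variation whose order is controlled by the bracket length. This produces a \emph{candidate} spanning family of variations. The second step will be to upgrade candidates to admissible variations, that is, to show that for every direction $v$ in $\R^n$ there is a variation of order at most some fixed $N$ pointing in $+v$. My tactic here would be a proof by contradiction: if no such variation exists for some unit vector $v$, one should be able to assemble the bracket data at $x_0$ into a formal obstruction (a formally invariant half-space at $x_0$), and then use real analyticity to promote this formal obstruction to an actual hyperplane separating $x_0 + \varepsilon v$ from $\mathrm{R}_{\mathcal{X}}(<t,x_0)$ for small $t$, contradicting STLC.

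The main obstacle is precisely this last contradiction step. The known necessary conditions for STLC due to Stefani, Sussmann and Kawski exclude certain bad bracket configurations but do not, in reverse, guarantee the positive variations needed to span every direction at some finite order; conversely, the known sufficient conditions produce variations but do not characterize STLC. Bridging these two sides in the real-analytic category is what makes the present statement a long-standing open conjecture, and it is where I would expect the approach above to stall. I expect that the strategy can be carried out \emph{modulo} exactly this gap, and I observe that in view of the main theorem announced in the abstract (polynomial growth of order $N$ together with higher-order perturbations preserves STLC), closing this gap would simultaneously resolve Conjecture \ref{conj:2} — which is presumably the sense in which the paper will ``connect'' the two conjectures.
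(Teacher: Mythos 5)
There is no proof to compare against here: the statement you are trying to prove is Conjecture~\ref{conj:1} itself, which the paper explicitly leaves open (it is known only for $n=2$, via the results of \cite{NNP:1976}); the paper's contribution is the implication that Conjecture~\ref{conj:1} implies Conjecture~\ref{conj:2}, not a proof of either. Your proposal correctly identifies that the whole difficulty sits in your second step, but the gap is in fact already present in your first step. Accessibility gives you that the Lie algebra of $\{X_0,\ldots,X_m\}$ spans $\R^n$ at $x_0$, and hence a finite $N_0$ such that brackets of length at most $N_0$ span; but it is not true that each such bracket is the leading term of an admissible control variation of controlled order \emph{with the sign you need}. Only the ``good'' brackets yield variations in both directions; for ``bad'' brackets one must neutralize them, and whether this can always be done at some finite order is exactly the content of the conjecture. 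Example~\ref{ex:1} of the paper (Kawski's system on $\R^4$) is a concrete counterexample to your step one: the system is STLC from the origin, yet the direction $-\tfrac{\partial}{\partial x_4}$ is provably \emph{not} attainable by any finite-switching variation built from iterated brackets, and one needs fast-switching variations with an unbounded number of switchings. So the ``candidate spanning family'' produced by your bracket argument need not consist of admissible variations, and there is no known mechanism to bound the order of whatever exotic variations do realize the missing directions.

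Your second step --- assembling a ``formal obstruction'' at $x_0$ and promoting it by real analyticity to a genuine separating hyperplane for the reachable set --- is not a known technique; the existing necessary conditions (Stefani, Sussmann, Kawski, Krastanov) exclude isolated bad bracket configurations but do not furnish a dichotomy of the form ``either some finite-order variation points in direction $v$, or the system is not STLC.'' Establishing such a dichotomy is the open problem, so the proposal as written cannot be completed. The only parts of your write-up that align with the paper are peripheral: the reduction of the growth rate condition to spanning families of variations (which is Lemma~\ref{lem:characterization_growth_rate} together with Theorem~\ref{thm:control_variations_STLC}), and your closing observation that resolving Conjecture~\ref{conj:1} would resolve Conjecture~\ref{conj:2}, which is precisely Corollary~\ref{thm:connection_conjectures} and is the theorem the paper actually proves.
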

It turns out that this polynomial growth condition for reachable sets of a
system has a close connection with the regularity of
the time-optimal map of the system~\cite{RMB-GS:90}. One can show that, if the
control system $\mathcal{X}$ is small-time locally controllable, then
the time-optimal map of $\mathcal{X}$ is locally
continuous~\cite{NNP:1977} (cf.~\cite[Theorem 2.2]{RMB-GS:90}, where
this local result has been extended to
a larger domain called escape domain). Similarly, one can show that the polynomial growth
condition for a control system $\mathcal{X}$ is equivalent to local
H\"{o}lder continuity of the time-optimal map of
$\mathcal{X}$~\cite{NNP:1971},~\cite[Theorem 2.5]{RMB-GS:90}.

One can easily check that, if the Conjecture
\ref{conj:1} is true then, for every small-time locally controllable
system, there exists a
family of control variations of order $N$ for the system
which generates all the directions in $\R^n$. As mentioned
in~\cite{AA:1999}, the results in~\cite{NNP:1976} show that both
Conjectures~\ref{conj:2} and~\ref{conj:1} hold on $\R^2$. However, to the best of our knowledge, these two conjectures
are still open for Euclidean spaces $\R^n$ with $n\ge 3$.

One of the challenges for studying Conjectures~\ref{conj:2}
and~\ref{conj:1} stems form the switchings in control variations. Most of the sufficient conditions for
small-time local controllability use control variations
with a finite number of switchings (e.g.~the sufficient
conditions in~\cite{sus1987}). It is well-known that if the family of
the control variations used for proving 
small-time local controllability of the control system have finite
number of switchings, then Conjecture~\ref{conj:2} holds for the
system. In 1988, Kawski found an elegant example of a polynomial control system which is small-time locally
controllable from $x_0$, but it is impossible to check small-time local
controllability using control variations with a finite number of switchings~\cite{MK:88,
  MK:1990}. Kawski used a specific class of control variations with an increasing number of
switching, which he called \emph{fast-switching variations}, to prove
small-time local controllability of this system. This example shows that
more complicated family of variations might be needed to characterize 
small-time local controllability. In~\cite{AAA-RVG:93}, Agrachev
and Gamkrelidze used a detailed analysis of the semigroup of
diffeomorphisms and built a framework for studying small-time local
controllability of control systems using fast-switching variations. Here, we revisit the famous example of
Kawski~\cite{MK:88}, to illustrate the complications that might arise
in studying Conjectures~\ref{conj:2} and~\ref{conj:1} using fast-switching variations.

\begin{expl}\label{ex:1}
Consider the control system $\mathcal{X}$ on $\R^4$, defined by
\begin{eqnarray*}
\dot{x}_1&=&u(t),\\
\dot{x}_2&=&x_1,\\
\dot{x}_3&=&x_1^3,\\
\dot{x}_4&=&x_3^2-x_2^7,
\end{eqnarray*}
where $u:\mathbb{R}\to [-1,1]$ is measurable. We want to study small-time local controllability of $\mathcal{X}$
from $\vect{0}_4\in \R^4$. Using suitable families of control
variations with finite numbers of switchings, one can show that
$\left\{\pm\frac{\partial}{\partial x_1}, \pm\frac{\partial}{\partial
    x_2}, \pm\frac{\partial}{\partial x_3}, \frac{\partial}{\partial
    x_4}\right\}$ are the admissible directions in the reachable set
of the systems $\mathcal{X}$~\cite{MK:88,sus1987}. In order to prove small-time local controllability of $\mathcal{X}$, one needs to find a control variation which generates the direction $-\frac{\partial}{\partial x_4}$.
It can be shown that there is no family of control variations with finite number of
switching which generates the direction $-\frac{\partial}{\partial
  x_4}$~\cite[Claim 2]{MK:88}. However, by using fast-switching variations, one can show that $\mathcal{X}$ is
small-time locally controllable from $\vect{0}_4\in \R^4$~\cite[Claim
1]{MK:88}~(see~\cite{MK:88} and \cite{MK:1990} for the elaborate construction of these control variations). Now consider the
control system $\mathcal{Y}$ on $\R^4$ defined by
\begin{eqnarray*}
\dot{y}_1&=&u(t),\\
\dot{y}_2&=&y_1,\\
\dot{y}_3&=&y_1^3,\\
\dot{y}_4&=&y_3^2-y_2^7+y_1^{58},
\end{eqnarray*}
where $u:\mathbb{R}\to [-1,1]$ is measurable. Note that $\mathcal{X}$ and $\mathcal{Y}$ have the same Taylor polynomial of order $57$ at
$\vect{0}_4\in \R^4$. Using the classical finite switching control
variations, it is easy to show that
$\left\{\pm\frac{\partial}{\partial y_1}, \pm\frac{\partial}{\partial
    y_2}, \pm\frac{\partial}{\partial y_3}, \frac{\partial}{\partial
    y_4}\right\}$ are the admissible directions in the reachable set
$\mathcal{Y}$. However, using the same fast-switching variations as for
the system $\mathcal{X}$, it is very complicated to study whether $-\frac{\partial}{\partial
    y_4}$ is an admissible direction for the reachable sets of the control system
$\mathcal{Y}$ at the point $\vect{0}_4\in \R^4$.
\end{expl}

In general, the families of control variations that are used to prove small-time local
controllability of a system might be even more complicated than the fast-switching control variations in Example~\ref{ex:1}. Therefore, for
small-time locally controllable systems, studying Conjecture
\ref{conj:2} using the form of families of control variations does not
seem to be conclusive.

\paragraph*{Contributions} The contributions of this paper are
manifold. First, we review an operator approach for studying piecewise
constant vector fields and their
flows called chronological calculus. This operator approach, which is
originally introduced in~\cite{agrachev1978exponential}, uses linear
operators on space of smooth functions to estimate the flows of
piecewise constant vector fields (see~\cite[Proposition
  2.1]{agrachev1978exponential} and~\cite[\S 2.4.4]{agrachevcontrol2004}). Using a suitable topology on
the space of real analytic
functions ~\cite{AM:1966}~\cite[\S 1.6 Theorem 27]{PD:2010}, a slightly
different version of this approach has been introduced
in~\cite{SJ:2016} to estimate the flows of real analytic piecewise constant  vector
fields (see~\cite[Theorem 3.8.1]{SJ:2016}). As the first minor contribution of this paper, we use these
frameworks to prove a uniform bound on these estimates of the flows of
piecewise constant vector fields. 

We study the existing literature on small-time local controllability of
systems and review a general class of control
variations defined in~\cite[Definition 2.1]{HF:1989}. As the second minor contribution of this
paper, we use this class of variations to prove the following useful
theorem: for a control system, the cone generated by the control
variations of order $N$ is the space $\R^n$ if and only if the
reachable sets of the control system shrink with polynomial rate of
order $N$ or higher with respect to time (see
  ~\cite[Theorem 1.3]{HF:1987}
  and~\cite[Theorem 2.1]{HF:1989}). 

Next, we introduce a suitable mapping for studying perturbations of reachable sets of real analytic control systems. 
We focus on real analytic control systems whose reachable sets
shrink with polynomial rate of order $N$ or higher. Using the notion
of normal reachability introduced in~\cite{MR0394756} and~\cite[Definition 3.4]{KAG:1992}, we construct a
multi-valued mapping (called perturbation mapping) by composing two
different maps: i) a map from the points in reachable sets of the original control system to the
switching times associated to the control variations, and ii) a
map from the switching times of the control variations to the
reachable sets of the perturbed system. We show that this multi-valued
mapping can capture the effect of perturbations of the vector fields of the system
on its reachable sets. Moreover, we prove the regularity of this perturbation mapping with respect to
time and states of the system. 

Finally, we prove the main result of the paper: for a
real analytic control-affine system whose reachable sets
shrink with polynomial rate of order $N$ or higher with respect to time, small-time local controllability is preserved
under polynomial vector field perturbations of order higher than $N$. The key idea for the proof is to use a
suitable family of control variations for the original system which
generates the space $\mathbb{R}^n$ and employ the real analytic version of
chronological calculus to show that the perturbed family of control variations
also generates the space $\mathbb{R}^n$. In particular, we show that our main
result in this paper implies that if Conjecture~\ref{conj:1} holds
then Conjecture~\ref{conj:2} also holds.

\paragraph*{Paper Organization}
In Sections~\ref{sec:notation} and~\ref{sec:functions_vector_fields}, we introduce the essential notation
for stating the main results of the paper.  In
Section~\ref{sec:chronological}, we introduce an operator approach for studying
piecewise constant vector fields and their flows. In
Section~\ref{sec:control_systems}, we introduce and study different
notions of controllability for $C^{\nu}$ control-affine systems. In Section~\ref{sec:variations} the notion of 
control variations is defined and a characterization of the growth rate condition is presented based on
the control variations of the system. In
Section~\ref{sec:perturbations}, for every two real analytic control systems
$\mathcal{X}$ and $\mathcal{Y}$ and every time $t$, we construct a
multi-valued mapping between the reachable sets of
the control system $\mathcal{X}$ and $\mathcal{Y}$. Finally, in Section~\ref{sec:main_result}, the main result of this paper is stated and proved.

\subsection{Notations and conventions}\label{sec:notation}
In this paper, the set of integers, non-negative integers, and natural
numbers are denoted by $\Z$, $\Z_{\ge 0}$, and $\N$, respectively. We denote the $n$-dimensional Euclidean space by
$\R^{n}$ and the zero vector in $\R^n$ by $\vect{0}_n$. The Euclidean norm of a vector $\mathbf{v}$ in $\R^n$ is
denoted by $\|\mathbf{v}\|$. The $n$-sphere is denoted by
$\mathbb{S}^n$ and the non-negative orthant in $\mathbb{R}^n$ is
denoted by $\R_{\ge 0}^{n}$. We denote
\begin{align*}
[-1,1]^n=\underbrace{[-1,1]\times [-1,1]\times \ldots\times [-1,1]}_{n
  \small\mbox{ times }}.
\end{align*}
For a nonempty subset $S\subseteq \R^n$, the interior of $S$ in $\R^n$ is
denoted by $\mathrm{int}(S)$ and the closure of $S$ in $\R^n$ is denoted by
$\overline{S}$. A multi-index of order $m$ is an element
$\mathbf{r}=(r_1,r_2,\ldots,r_m)\in \Z_{\ge 0}^m$. For all multi-indices $\mathbf{r}$ and $\mathbf{s}$ of order $m$, every
$\mathbf{x}=(x_1,x_2,\ldots,x_m)\in \R^m$, and $f:\R^m\to\R^n$, we define
\begin{eqnarray*}
|\mathbf{r}|&=&r_1+r_2+\ldots+r_m,\qquad\quad \mathbf{r}!=(r_1!)(r_2!)\ldots(r_m!),\\
\mathbf{x}^{\mathbf{r}}&=&x_1^{r_1}x_2^{r_2}\ldots x_m^{r_m},\qquad\quad\qquad
D^{\mathbf{r}}f(x)=\frac{\partial^{|\mathbf{r}|} f}{\partial
               x_1^{r_1}\partial x_2^{r_2}\ldots \partial x_m^{r_m}}.
\end{eqnarray*}
The space of all decreasing sequences $\{a_i\}_{i\in\N}$ such that
$a_i\in \R_{>0}$ and $\lim_{n\to\infty} a_n=0$ is denoted by
$c^{\downarrow}_{0}$. Let $x\in \R^n$ and $r\in
\R_{>0}$. Then the Eucleadian open ball centered at $x$ with radius $r$ is
denoted by $\mathrm{B}(x,r)$ and its closure is denoted by
$\overline{\mathrm{B}}(x,r)$. In this paper, whenever we use the letter $\nu$, we mean that $\nu\in
\N\cup\{\infty,\omega\}$. Let $U\subseteq\R^m$ be an open set and
$f:U\to \R^{n}$. For
$\nu\in N\cup\{\infty\}$, the
mapping $f$ is a $\mathrm{C}^{\nu}$-mapping if, for every multi-index
$\mathbf{r}\in \Z_{\ge 0}^m$ with property that $|\mathbf{r}|\le \nu$,
the mapping $D^{\mathbf{r}}f$ is continuous. The
mapping $f$ is a $\mathrm{C}^{\omega}$-mapping if it is a
$\mathrm{C}^{\infty}$-mapping and, for every $x_0\in U$, the
Taylor series of $f$ around $x_0$ converges locally. Let $k\in \N$, $(V,\|.\|_{V})$ be a normed vector space, and $f:\R\to V$ and
$g:\R\to V$ be two curves on $V$. Then we write
\begin{align*}
f(x)=g(x)+\mathcal{O}(x^k)
\end{align*}
if there exists $\alpha\in \R$ such that we have $\lim_{x\to 0}\frac{\|f(x)-g(x)\|_{V}}{|x|^{k}}=\alpha$. Let $U$ and $V$ be
two sets and $F:U \rightrightarrows V$ be a multi-valued map. Then
a selection of $F$ is a single-valued mapping $f:U \to
V$ with the property that $f(x)\in F(x)$, for every $x\in U$.

\section{Functions and vector fields}\label{sec:functions_vector_fields}

In this section, we study functions and vector fields on the
Euclidean space $\R^n$. The space of all $\mathrm{C}^{\nu}$-functions on $\R^n$ is denoted by
$\mathrm{C}^{\nu}(\R^n)$ and the space of all $\mathrm{C}^{\nu}$-vector fields on
$\R^n$ is denoted by $\Gamma^{\nu}(\R^n)$. It is easy to see that
both $\mathrm{C}^{\nu}(\R^n)$ and $\Gamma^{\nu}(\R^n)$ are vector
spaces over $\R$. Given $x_0\in \R^n$, we define the functional
$\mathrm{ev}_{x_0}:\mathrm{C}^{\nu}(\R^n)\to \R$ by $\mathrm{ev}_{x_0}(f)=f(x_0)$,
for every $f\in \mathrm{C}^{\nu}(\R^n)$. In control theory, it is common to work with
time-varying vector fields. In this paper, without loss
  of generality, we restrict our attention to piecewise constant
  vector fields.

\begin{definition}[\textbf{Piecewise constant vector fields}]
Let $\mathbb{T}\subseteq \R$ be an interval. The map $X:\mathbb{T}\times \R^n\to \R^n$ is a
piecewise constant vector field of class $\mathrm{C}^{\nu}$ if the
following hold:
\begin{itemize}
\item[(i)] For every $t\in \mathbb{T}$, the map $X_t:\R^n\to \R^n$ defined by
\begin{equation*}
X_t(x)=X(t,x),\qquad\forall x\in \R^n,
\end{equation*}
is a vector field of class $\mathrm{C}^{\nu}$.
\item[(ii)] For every $x\in \R^n$, the map $X^x:\mathbb{T}\to \R^n$ defined
  by
\begin{equation*}
X^x(t)=X(t,x),\qquad\forall t\in \mathbb{T},
\end{equation*}
is piecewise constant. 
\end{itemize}
\end{definition}
Let $X:\R\times\R^n\to\R^n$ be a piecewise constant vector
field. Then, by the fundamental theorem of differential
equations~\cite[Theorem 2.3]{coddington1955theory}, for every $x_0\in
\R^n$, there exist a maximal interval $\mathbb{T}_{x_0}$ and an absolutely continuous curve $t\mapsto
\exp(tX)(x_0)$ which satisfies the following initial value
problem:
\begin{align*}
\frac{d}{dt}(\exp(tX)(x_0))&=X(t,\exp(tX)(x_0)),\qquad\mbox{ for almost
                              every }
                              t\in \mathbb{T}_{x_0}\\
\exp(0 X)(x_0)&=x_0.
\end{align*}
The map $t\mapsto \exp(tX)(x_0)$ is called the integral curve of the
piecewise constant vector field
$X$ passing through $x_0$.

\begin{definition}[\textbf{Complete vector fields}]
A piecewise constant vector field $X:\R\times \R^n\to \R^n$ is complete if, for every $x_0\in \R^n$, the integral
curve of $X$ passing through $x_0$ exists for all $t\in\R$.
\end{definition}


\begin{definition}[\textbf{Flows of piecewise constant vector fields}]
Let $X:\R\times\R^n\to\R^n$ be a complete piecewise constant $C^{\nu}$-vector
field. Then the flow of $X$ is the map $\exp(X):\R\times \R^n\to \R^n$
defined by
\begin{align*}
\exp(X)(t,x)=\exp(tX)(x),\qquad\forall (t,x)\in \R\times \R^n.
\end{align*}
\end{definition}

\section{Operator approach for time-varying vector fields}\label{sec:chronological}

In this section, we review a well-known operator approach
which allows us to translate the nonlinear finite-dimensional systems
into linear infinite-dimensional systems. This
operator approach, which is known as \emph{chronological calculus},
was first proposed by Agrachev and Gamkrelidze
in~\cite{agrachev1978exponential}. While the chronological calculus 
is originally developed to study time-varying vector fields, in this paper we
focus on a simpler version of it which studies piecewise constant vector fields. In this framework,
$\mathrm{C}^{\infty}$-vector fields and
$\mathrm{C}^{\infty}$-diffeomorphisms are identified with derivations
and unital algebra isomorphisms on $\mathrm{C}^{\infty}(\R^n)$,
respectively. These identifications are then used to recast the nonlinear dynamical system governing
the flow of a piecewise constant vector field to a linear differential equation an infinite-dimensional space. Using this recasting and the Whitney
compact-open topology on the space of $\mathrm{C}^{\infty}(\R^n)$, an
asymptotic expansion for the flow of a piecewise constant real
analytic vector field is developed and its convergence has been
studied in~\cite{agrachev1978exponential}. In~\cite{SJ-ADL:2014}
and~\cite{SJ:2016}, this framework has been extended in two directions. First, the real analytic vector fields
are considered as derivations on $\mathrm{C}^{\omega}(\R^n)$
and real analytic diffeomorphisms are considered as unital algebra homomorphism on
$\mathrm{C}^{\omega}(\R^n)$. Moreover, the space
$\mathrm{C}^{\omega}(\R^n)$ is endowed with the $\mathrm{C}^{\omega}$-topology
and the convergence of the asymptotic expansion for the flow of a
piecewise constant real analytic vector field is studied
in this new topology~\cite{SJ:2016}. In the sequel, we adopt the
approach of~\cite{SJ-ADL:2014} for studying piecewise constant real analytic vector fields and their flows.

\begin{definition}[\textbf{Vector fields and diffeomorphisms}]\label{def:vector_field_derivation}
Let $V:\R^n\to \R^n$ be a real analytic vector field and $\phi:\R^n\to
\R^n$ be a real analytic diffeomorphism. Then 
\begin{enumerate}
\item[(i)] we define the derivation $\widehat{V}:\mathrm{C}^{\omega}(\R^n)\to \mathrm{C}^{\omega}(\R^n)$ by
\begin{equation*}
\widehat{V}(f)=\mathcal{L}_{V}f,
\end{equation*}
where $\mathcal{L}_{V}f$ is the Lie derivative of $f$ in the
direction of the vector field $V$.
\item[(ii)] we define the unital algebra homomorphism $\widehat{\phi}:\mathrm{C}^{\omega}(\R^n)\to \mathrm{C}^{\omega}(\R^n)$ by
\begin{equation*}
\widehat{\phi}(f)=f\scirc\phi.
\end{equation*}
\end{enumerate}
\end{definition}
The space of linear mappings from $\mathrm{C}^{\omega}(\R^n)$ to $\mathrm{C}^{\omega}(\R^n)$ is denoted by
$\mathrm{LC}^{\omega}(\R^n)$ and it is clear that we
have $\Gamma^{\omega}(\R^n)\subset \mathrm{LC}^{\omega}(\R^n)$. Thus, for every real
analytic vector field $X:\R^n\to \R^n$ and
every real analytic mapping $\phi:\R^n\to \R^n$, we have $\widehat{\phi},\widehat{V}\in \mathrm{LC}^{\omega}(\R^n)$.
Using the operator characterization of vector fields, a piecewise constant $\mathrm{C}^{\nu}$-vector field $X:\mathbb{T}\times
\R^n\to \R^n$ can be considered as a piecewise constant
curve $t\mapsto \widehat{X}_t$ on the space
$\mathrm{LC}^{\omega}(\R^n)$. Therefore, for studying properties of
piecewise constant vector fields in this framework, we need to define a suitable topology on the vector space $\mathrm{LC}^{\omega}(\R^n)$.
\begin{definition}[\textbf{Topological vector space}]
Let $E$ be a vector space over $\R$ and $\tau$ be a topology on
$E$. Then 
\begin{enumerate}
\item[(i)] the pair $(E,\tau)$ is a topological vector space if both addition and scalar
multiplication in $E$ are continuous with respect to $\tau$; 
\item[(ii)] the topological vector space $(E,\tau)$ is a locally convex space
  if the topology $\tau$ is generated by a family of seminorms $\{p_{i}\}_{i\in\Lambda}$ on $E$;
\item[(iii)] a subset $B\subseteq E$ is bounded with respect to $\tau$ if, for every neighborhood $U$of $0$ in $E$,
there exists $\alpha\in \R$ such that $B\subset \alpha U$.
\end{enumerate} 
\end{definition}
For locally convex spaces, bounded sets can be equivalently
characterized using the seminorms~\cite[Theorem 1.37]{rudinfunctional}. 
\begin{theorem}[\textbf{Seminorm characterization of bounded sets}]\label{thm:boundedset_locallyconvex}
Let $E$ be a locally convex space which is generated by the family of
seminorms $\{p_{i}\}_{i\in\Lambda}$. A set $B\subseteq E$ is bounded
if and only if, for every $i\in\Lambda$, there exists $N_i\in \R_{>0}$
such that
\begin{align*}
p_i(v)\le N_i,\qquad\forall v\in B.
\end{align*}
\end{theorem}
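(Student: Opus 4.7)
The plan is to prove the equivalence by unpacking what it means for the topology $\tau$ on $E$ to be generated by the family of seminorms $\{p_i\}_{i\in\Lambda}$. Concretely, a neighborhood base of $0$ in $(E,\tau)$ is given by finite intersections of sets of the form $U_{i,\epsilon}=\{v\in E\suchthat p_i(v)<\epsilon\}$, with $i\in\Lambda$ and $\epsilon>0$. I will use this base to translate the definition of bounded sets (``for every neighbourhood $U$ of $0$ there exists $\alpha$ with $B\subset \alpha U$'') into seminorm inequalities, and then run the two directions separately.

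For the forward implication, I would assume $B$ is bounded in $(E,\tau)$ and fix an arbitrary $i\in\Lambda$. The set $U_{i,1}=\{v\in E\suchthat p_i(v)<1\}$ is a neighborhood of $0$, so by hypothesis there exists $\alpha_i\in\R_{>0}$ with $B\subset \alpha_i U_{i,1}$. Any $v\in B$ can then be written as $v=\alpha_i w$ with $p_i(w)<1$, and since $p_i$ is a seminorm we get $p_i(v)=\alpha_i\, p_i(w)<\alpha_i$. Setting $N_i=\alpha_i$ gives the required uniform bound $p_i(v)\le N_i$ on $B$.

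For the reverse implication, I would assume that for each $i\in\Lambda$ there is $N_i$ with $p_i(v)\le N_i$ on $B$, and verify the boundedness condition. Given an arbitrary neighborhood $U$ of $0$, I use the base above to pick indices $i_1,\ldots,i_k\in\Lambda$ and $\epsilon_1,\ldots,\epsilon_k>0$ with
\begin{equation*}
\bigcap_{j=1}^{k}U_{i_j,\epsilon_j}\subseteq U.
\end{equation*}
Choosing $\alpha>\max_{1\le j\le k}N_{i_j}/\epsilon_j$, for every $v\in B$ and every $j$ one has $p_{i_j}(v/\alpha)=p_{i_j}(v)/\alpha\le N_{i_j}/\alpha<\epsilon_j$, so $v/\alpha$ lies in every $U_{i_j,\epsilon_j}$ and hence in $U$. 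Thus $B\subset\alpha U$, as required.

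The only real point of care is the reverse direction, where one must genuinely use the fact that a general neighborhood of $0$ is controlled by \emph{finitely} many of the generating seminorms; if one tried to work seminorm by seminorm one would not get a single scalar $\alpha$ working for all of $U$. The forward direction is essentially immediate from testing boundedness against the single-seminorm neighborhoods $U_{i,1}$, and no use of local convexity beyond the seminorm description of $\tau$ is needed.
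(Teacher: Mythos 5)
Your proof is correct and is the standard argument; the paper does not prove this statement itself but cites it as Theorem 1.37 of Rudin's \emph{Functional Analysis}, which uses essentially the same two steps you give (testing boundedness against the single-seminorm neighborhoods $U_{i,1}$ for necessity, and using that a basic neighborhood of $0$ is a finite intersection of the $U_{i_j,\epsilon_j}$ for sufficiency). The only cosmetic point is that the paper's definition of boundedness asserts the existence of some $\alpha\in\R$ rather than $\alpha\in\R_{>0}$; since each $U_{i,\epsilon}$ is balanced this is harmless, and taking $N_i=|\alpha_i|$ (or any positive number in the degenerate case $\alpha_i=0$) closes that tiny gap.
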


We are now ready to define a locally convex topology on the vector
spaces $\mathrm{C}^{\omega}(\R^n)$ and $\mathrm{LC}^{\omega}(\R^n)$ using families of seminorms. 

\begin{definition}[\textbf{Real analytic seminorms}]
Let $K\subset \R^n$ be a compact set and $\mathbf{a}\in
c^{\downarrow}_0$. 
\begin{enumerate}
\item[(i)] We define the seminorm $\rho^{\omega}_{K,\mathbf{a}}:\mathrm{C}^{\omega}(\R^n)\to \R_{\ge 0}$ as
\begin{equation*}
\rho^{\omega}_{K,\mathbf{a}}(f)=\sup\left\{\frac{a_0a_1\ldots a_{|r|}}{|r|!}\left\|
    D^{(r)}f(x)\right\|\suchthat x\in K, |r|\in \Z_{\ge 0}\right\}.
\end{equation*}
The topology on $\mathrm{C}^{\omega}(\R^n)$ generated by the family of seminorms
$\rho^{\omega}_{K,\mathbf{a}}$ is called the $\mathrm{C}^{\omega}$-topology.
\item[(ii)] Let $f\in \mathrm{C}^{\omega}(\R^n)$. We define the seminorm
$\rho^{\omega}_{K,\mathbf{a},f}:\mathrm{LC}^{\omega}(\R^n)\to \R_{\ge 0}$ as
\begin{equation*}
\rho^{\omega}_{K,\mathbf{a},f}(X)=\sup\left\{\frac{a_0a_1\ldots a_{|r|}}{|r|!}\left\|
    D^{(r)}\left(Xf\right)(x)\right\|\suchthat x\in K, |r|\in \Z_{\ge 0}\right\}
\end{equation*}
The topology on $\mathrm{LC}^{\omega}(\R^n)$ generated by the family of seminorms
$\left\{\rho^{\omega}_{K,\mathbf{a},f}\right\}$ is
called the $\mathrm{C}^{\omega}$-topology.
\end{enumerate}
\end{definition}

Note that one can define another locally convex topology on
$\mathrm{C}^{\omega}(\R^n)$ by inducing the Whitney compact-open
topology and using the subspace relation~\cite[\S 2.2]{agrachevcontrol2004},~\cite[\S
6]{kriegl1997convenient}. It turns out that the $\mathrm{C}^{\omega}$-topology on the space
$\mathrm{C}^{\omega}(\R^n)$ is finer than the subspace topology
induced from the Whitney topology on
$\mathrm{C}^{\infty}(\R^n)$~\cite[Chapter 5]{SJ-ADL:2014}. The $\mathrm{C}^{\omega}$-topology and its properties has been studied throughly
in~\cite{AM:1966},~\cite{PD:2010}, and~\cite{kriegl1997convenient}. The
$\mathrm{C}^{\omega}$-topology on the space of real analytic functions
has been first defined and studied using advanced tools in analysis in~\cite{AM:1966}. The above seminorm characterization
of the $\mathrm{C}^{\omega}$-topology has been introduced and proved in~\cite{DV:2013} (see~\cite{PD:2010}
for a detailed study of the $\mathrm{C}^{\omega}$-topology on the space $\mathrm{C}^{\omega}(\R^n)$ ). Using the $\mathrm{C}^{\omega}$-topology on the vector space
$\mathrm{LC}^{\omega}(\R^n)$, we can study properties of
piecewise constant vector fields. The set of
piecewise constant curves with domain $\mathbb{T}$ on $\mathrm{LC}^{\omega}(\R^n)$
is denoted by $\mathrm{PC}(\mathbb{T};\mathrm{LC}^{\omega}(\R^n))$. Let $S\subseteq \mathrm{LC}^{\omega}(\R^n)$. We define the subset
$\mathrm{PC}(\mathbb{T};S)\subseteq \mathrm{PC}(\mathbb{T};\mathrm{LC}^{\omega}(\R^n))$ as
\begin{equation*}
\mathrm{PC}(\mathbb{T};S)=\left\{\lambda\in
  \mathrm{PC}(\mathbb{T};\mathrm{LC}^{\omega}(\R^n))\suchthat \lambda(t)\in S, \
  \mbox{ for almost every } t\in \mathbb{T} \right\}.
\end{equation*} 

Let $X:\mathbb{T}\times \R^n\to \R^n$ be a piecewise constant real analytic
vector field. Then it is easy to see that $X$ is
piecewise constant if and only if $t\mapsto
\widehat{X}_t$ is a piecewise constant curve on
$\mathrm{LC}^{\omega}(\R^n)$. By considering $X$ as a curve $t\mapsto
\widehat{X}_t$ on the space $\mathrm{LC}^{\omega}(\R^n)$, one can also translate the nonlinear differential equations governing the flow of $X$:
\begin{align*}
\frac{d}{dt}\exp(tX)(x_0)&=X(t,\exp(tX)(x_0)),\quad \mbox{ for almost
                           every }t\in \R\\
\exp(0X)(x_0)&=x_0,
\end{align*}
into the following linear differential equations:
\begin{eqnarray}\label{eq:1}
\begin{split}
\frac{d}{dt}\widehat{\exp}(tX)&=\widehat{\exp}(tX)\scirc
\widehat{X}_t,\quad \mbox{ for almost every }t\in\R\\
\widehat{\exp}(0X)&=\mathrm{id},
\end{split}
\end{eqnarray}
where $\widehat{\exp}(tX)$ is the unital algebra homomorphism
associated to $\exp(tX)$ (see
Definition~\ref{def:vector_field_derivation}). Note that
equation~\eqref{eq:1} is a family of linear differential equations on
the infinite dimensional locally convex space $\mathrm{LC}^{\omega}(\R^n)$. 
One can study the sequence of Picard iterations for this infinite dimensional linear
differential equations~\eqref{eq:1}~\cite[Chapter 1, \S
3]{coddington1955theory},~\cite{SGL-GOS:1994}. 
\begin{definition}[\textbf{Sequence of flow iterations}]\label{eq:sequence_iterations}
Let $X$ be a piecewise constant vector field of class
$\mathrm{C}^{\omega}$. We define the curve 
$t\mapsto \widehat{\exp}_0(tX)$ on $\mathrm{LC}^{\omega}(\R^n)$ as:
\begin{equation*}
\widehat{\exp}_0(tX)=\mathrm{id},\qquad\forall t\in [0,T'].
\end{equation*}
Then, for every $k\in \N$, we define the curve $t\mapsto
\widehat{\exp}_k (tX)$ on $\mathrm{LC}^{\omega}(\R^n)$ inductively as
\begin{equation}\label{eq:Picards_iterations}
\widehat{\exp}_k(tX)=\mathrm{id}+\int_{0}^{t}\widehat{\exp}_{k-1}(\tau
X)\scirc\widehat{X}(\tau)d\tau,\qquad\forall t\in [0,T'].
\end{equation}
\end{definition}
For linear differential equations on infinite dimensional locally convex
spaces, there does not exist a general result for convergence of the sequence of
Picard iterations~\cite{SGL-GOS:1994}. However, for the differential
equations~\eqref{eq:1}, one can prove the following estimates for the seminorms of the sequence of 
iterations of the flows in
Definition~\ref{eq:sequence_iterations}~\cite[Theorem
3.8.1]{SJ:2016}. The following theorem can be considered as an extension of the
estimates in~\cite[\S 2.4.4]{agrachevcontrol2004}.

\begin{theorem}[\textbf{Estimates for flow iterations}]\label{th:3}
Let $B$ be a bounded set in $\Gamma^{\omega}(\R^n)$. Then the
following statements hold:
\begin{enumerate}
\item[(i)] there exists $T_{B}$ such that, for every $X\in
  \mathrm{PC}([0,T_B];B)$ and every $k\in \N$, the map $t\mapsto
  \widehat{\exp}_k(tX)$ is defined on $[0,T_B]$,
\item[(ii)] for every compact set $K\subseteq \R^n$, every $f\in \mathrm{C}^{\omega}(\R^n)$, and every
$\mathbf{a}\in c^{\downarrow}_0$, there exist positive constants $M, M_f>0$ such that, for every $X\in \mathrm{PC}([0,T_B];
B)$, we have
\begin{equation*}
\rho^{\omega}_{K,\mathbf{a},f}(\widehat{\exp}_k(t X)-\widehat{\exp}_{k-1}(t X))\le
\left(Mt\right)^{k+1} M_f, \quad\forall t\in [0,T_B], \ \forall k\in
\N.
\end{equation*}
\end{enumerate}
\end{theorem}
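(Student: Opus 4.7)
The plan is to prove both parts simultaneously by induction on $k$, with part (ii) doing essentially all the work and part (i) following from the uniform convergence of the estimates obtained. The three ingredients I need are: the seminorm characterization of boundedness (Theorem~\ref{thm:boundedset_locallyconvex}); a composition estimate for the seminorms $\rho^{\omega}_{K,\mathbf{a},f}$ on $\mathrm{LC}^{\omega}(\R^n)$; and the integral recursion~\eqref{eq:Picards_iterations}.

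\emph{Setting up the uniform bounds.} First I would invoke Theorem~\ref{thm:boundedset_locallyconvex}: since $B\subset\Gamma^{\omega}(\R^n)$ is bounded, for every compact $K$, every $\mathbf{a}\in c^{\downarrow}_0$, and every $g\in\mathrm{C}^{\omega}(\R^n)$, there exists a constant $N(K,\mathbf{a},g)$ with $\rho^{\omega}_{K,\mathbf{a},g}(\widehat{V})\le N(K,\mathbf{a},g)$ for every $V\in B$. Consequently, for every $X\in\mathrm{L}^{\infty}([0,T];B)$, the inequality $\rho^{\omega}_{K,\mathbf{a},g}(\widehat{X}_\tau)\le N(K,\mathbf{a},g)$ holds for almost every $\tau$.

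\emph{Composition estimate.} The heart of the proof is a submultiplicativity-type estimate: given $K$, $\mathbf{a}$, and $f$, one can find an enlarged compact $K'\supseteq K$, a dominating sequence $\mathbf{a}'\in c^{\downarrow}_0$, and a function $g\in\mathrm{C}^{\omega}(\R^n)$ (depending on $f$) together with a constant $C>0$ such that, for all $S,T\in\mathrm{LC}^{\omega}(\R^n)$,
\begin{equation*}
\rho^{\omega}_{K,\mathbf{a},f}(S\scirc T)\le C\cdot\rho^{\omega}_{K',\mathbf{a}',g}(S)\cdot\rho^{\omega}_{K',\mathbf{a}',f}(T).
\end{equation*}
I would derive this by applying Leibniz/Fa\`a di Bruno to $D^r((S\scirc T)f)$ and absorbing the resulting combinatorial factorials into the decay of the sequence $\mathbf{a}$ (this is the $\mathrm{C}^{\omega}$-analogue of the classical estimate in~\cite[\S 2.4.4]{agrachevcontrol2004}). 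This is the step where real analyticity, encoded in the weight sequence $\mathbf{a}$, is indispensable.

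\emph{Induction.} The base case $k=0$ is immediate:
\begin{equation*}
\widehat{\exp}_{1}(tX)-\widehat{\exp}_0(tX)=\int_{0}^{t}\widehat{X}(\tau)\,d\tau,
\end{equation*}
so $\rho^{\omega}_{K,\mathbf{a},f}(\widehat{\exp}_1(tX)-\widehat{\exp}_0(tX))\le tN(K,\mathbf{a},f)$. For the inductive step, subtract consecutive instances of~\eqref{eq:Picards_iterations} to get
\begin{equation*}
\widehat{\exp}_k(tX)-\widehat{\exp}_{k-1}(tX)=\int_0^t\bigl(\widehat{\exp}_{k-1}(\tau X)-\widehat{\exp}_{k-2}(\tau X)\bigr)\scirc\widehat{X}(\tau)\,d\tau,
\end{equation*}
apply $\rho^{\omega}_{K,\mathbf{a},f}$, bring it under the integral, use the composition estimate, and then invoke the inductive hypothesis applied with the shifted data $(K',\mathbf{a}',g)$ and a different test function. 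Choosing $M$ large enough to absorb $C$, $N(K',\mathbf{a}',f)$, and the integral factor $1/(k+1)$ yields $(Mt)^{k+1}M_f$, completing (ii). For (i), selecting $T_B$ so that $MT_B<1$ makes the telescoping series $\sum_{k\ge 0}(\widehat{\exp}_{k+1}(tX)-\widehat{\exp}_k(tX))$ absolutely summable in every seminorm, which in particular shows that each $\widehat{\exp}_k(tX)$ is a well-defined element of $\mathrm{LC}^{\omega}(\R^n)$ on $[0,T_B]$.

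\emph{Main obstacle.} The one genuinely delicate step is the composition estimate: unlike the $\mathrm{C}^{\infty}$ case, where compact-open seminorms are easy to compose, the $\mathrm{C}^{\omega}$-topology forces one to control \emph{all} derivatives simultaneously through the weight sequence $\mathbf{a}$. The bookkeeping required to choose $\mathbf{a}'$ so that the Leibniz/Fa\`a di Bruno factorials are dominated is where most of the technical work lies, and it is precisely this that the machinery of~\cite{SJ:2016} was built to handle.
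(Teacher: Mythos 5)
First, note that the paper does not actually prove Theorem~\ref{th:3}: it is quoted from \cite[Theorem 3.8.1]{SJ:2016} and described as the $\mathrm{C}^{\omega}$-analogue of the estimates in \cite[\S 2.4.4]{agrachevcontrol2004}. Your architecture --- unwind the recursion~\eqref{eq:Picards_iterations}, control compositions via the seminorms, and absorb factorial growth into the weight sequence $\mathbf{a}$ --- is indeed the architecture of the cited proof, and you correctly identify real analyticity as entering through the weights.

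There are, however, two concrete problems with the way you set up the induction. First, the submultiplicative estimate $\rho^{\omega}_{K,\mathbf{a},f}(S\scirc T)\le C\,\rho^{\omega}_{K',\mathbf{a}',g}(S)\,\rho^{\omega}_{K',\mathbf{a}',f}(T)$ cannot hold for arbitrary $S,T\in \mathrm{LC}^{\omega}(\R^n)$ with a fixed $g$ depending only on $f$. Since $(S\scirc T)(f)=S(Tf)$, one has the identity $\rho^{\omega}_{K,\mathbf{a},f}(S\scirc T)=\rho^{\omega}_{K,\mathbf{a},Tf}(S)$: the left factor is tested against the \emph{moving} function $Tf$, not a fixed $g$, and the family $\{\rho^{\omega}_{K,\mathbf{a},f}\}$ only gives the topology of pointwise convergence on test functions, which is too weak for joint continuity of composition. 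What is actually available (and what the induction needs) is an estimate for compositions of \emph{derivations} $\widehat{X}_{\tau_1}\scirc\cdots\scirc\widehat{X}_{\tau_k}$ with the $X_{\tau_j}$ ranging over the bounded set $B$; concretely, the inductive hypothesis must be stated uniformly over the bounded family of test functions $\{\widehat{X}_{\tau}g\}$ generated at each step, and the resulting constants compound. Second, this compounding is exactly factorial: each application of a derivation costs one derivative, so the $k$-fold composition constant grows like $C^{k}k!\,M_f$, and it is the $t^{k}/k!$ volume of the $k$-simplex in the iterated integral $\int_0^t\int_0^{\tau_k}\cdots\int_0^{\tau_2}$ that cancels it to give $(Mt)^{k+1}M_f$. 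Your proposal to ``absorb the integral factor $1/(k+1)$ into $M$'' is therefore backwards: that factor must be retained to kill the factorial, and discarding it leaves a bound that diverges in $k$ for every fixed $t>0$. With the hypothesis restated for derivations from a bounded set and the factorial bookkeeping made explicit, your argument becomes the standard one.
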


Using the estimate in Theorem~\ref{th:3}, one can get an estimate for the flow of a vector
field $X$ using the sequence of iterations in Definition~\ref{eq:sequence_iterations}.

\begin{theorem}\label{cor:estimate_picard_iterations}
Let $B$ be a bounded set in $\mathrm{LC}^{\omega}(\R^n)$. Then there
exist $M,L>0$ and $\overline{T}\le T_B$ such that, for every $X\in \mathrm{PC}([0,\overline{T}];
B)$ and every $i\in \{1,2,\ldots,n\}$, we have
\begin{equation*}
\|\mathrm{ev}_{x_0}\scirc\widehat{\exp}(t
X)(x^i)-\mathrm{ev}_{x_0}\scirc\widehat{\exp}_{k}(t X)(x^i)\|\le
\frac{Mt^{k+1}}{1-Mt}L, \quad\forall t\in [0,\overline{T}],
\end{equation*}
where $x^i$ is the $i$th coordinate function on $\R^n$.
\end{theorem}

It is worth mentioning that, Theorem~\ref{th:3}(ii) and Theorem~\ref{cor:estimate_picard_iterations} can alternatively be proved
using the estimates in ~\cite[\S 2.4.4]{agrachevcontrol2004}. However, the real analyticity
  of the time varying vector fields is essential for these results to
  hold. Since Theorem~\ref{cor:estimate_picard_iterations} is crucial
for the proof of the main result of this paper, we provide a proof for it using Theorem~\ref{th:3} in
Appendix~\ref{app:1}. 




\section{Control-affine systems}\label{sec:control_systems}

In this section, we introduce several controllability notions
associated with a $C^{\nu}$ control-affine system
$(\mathcal{X},\mathfrak{C})$. For the rigorous definition of the
$C^{\nu}$ control-affine systems, their trajectories, and their
equilibrium points, we refer the readers to the introduction of the
paper.

\begin{definition}[\textbf{Controllability of $\mathrm{C}^{\nu}$ control-affine
system}]\label{def:reachable}
Suppose that $(\mathcal{X},\mathfrak{C})$ is a $\mathrm{C}^{\nu}$ control-affine
system on $\R^n$, with $\mathcal{X}=\{X_0,X_1,\ldots,X_m\}$, $t\in
\R_{>0}$, and $x_0\in \R^n$ is an equilibrium point of $(\mathcal{X},\mathfrak{C})$. Then
\begin{enumerate}
\item[(i)]
The $\mathrm{C}^{\nu}$ control-affine system $\mathcal{X}$ is small-time locally controllable from
$x_0$ if, for every $t\in \R_{>0}$, we have
\begin{equation*}
x_0\in \mathrm{int}\left(\mathrm{R}_{\mathcal{X}}(\le t, x_0)\right);
\end{equation*} 
\item[(ii)] Let $N\in \Z_{>0}$ be a positive integer. Then the
  $\mathrm{C}^{\nu}$ control-affine system $\mathcal{X}$ satisfies growth
  rate condition of order $N$ at the point $x_0$ if there exist $C, T>0$ such that, for every $t\in (0,T]$, we have
\begin{equation*}
\overline{\mathrm{B}}(x_0, Ct^N)\subset \mathrm{R}_{\mathcal{X}}(\le t, x_0).
\end{equation*} 
\end{enumerate}
\end{definition}
\begin{rem}
The following remarks are in order. 
\begin{enumerate}[label=(\roman*)]
\item Note that, in the definition of the above controllability notions, we do not impose any restriction on the
  structure of the control set $\mathfrak{C}$. For control-affine systems, this
  choice of control set seems natural. However, more general structures
  for the control set, such as separable
  topological space~\cite{sontag1998mathematical} and Frechet spaces~\cite{sussmann1998introduction},
  have been proposed in the literature. In the bundle view of the control
  systems, the control set is usually banished and the system is
  considered as a family of vector fields~\cite{JCW:1979}. In the
  chronological calculus, a control system is usually defined as a
  parametrized family of vector fields on
  $\R^n$~\cite{agrachevcontrol2004}. We refer the interested readers
  to~\cite{SJ-ADL:2014-2} for a review of the literature on various
  definitions of control systems and, in particular,  the properties of
  the real analytic control-affine systems.
  
\item if $x_0$ is an equilibrium point for
  $(\mathcal{X},\mathfrak{C})$, then in the absence of 
  control inputs, i.e., $\mathbf{u}=\vect{0}_m$, the states of the
  system stay at the point $x_0$; 

\item while the definition of reachable sets with measurable controls is
  widely used and studied in the literature (cf.~\cite{MK:1990,sussmannlie1983,sus1987}), some authors consider other classes of
  controls such as piecewise constant controls~\cite{KAG:1992} and bang-bang
  controls~\cite{AK:1974} for
  studying control systems and their reachable sets. In general,
  the reachable sets defined using these different classes of controls
  are not the same~\cite{sus1987}. We refer the interested reader
  to~\cite{KAG:1992,AK:1974,sus1987} for a through study of the 
  connections between these reachable sets; 
  
\item it is clear form Definition~\eqref{def:reachable} that, if a control-affine system $(\mathcal{X},\mathfrak{C})$
  satisfies the growth rate condition of order $N$ at the point $x_0$, then it is small-time
  locally controllable from $x_0$; 
  
\item for a general $C^{\nu}$ control-affine system
  $(\mathcal{X},\mathfrak{C})$, small-time local
  controllability at the point $x_0$ does not necessarily imply that $x_0$ is
  an equilibrium point of the system. However, for a real analytic control-affine
  system $(\mathcal{X},\mathfrak{C})$ with compact control set
  $\mathfrak{C}$, if the system is small-time locally
  controllable from $x_0$, then $x_0$ is in the convex hull of the
  set $\{X_0(x_0)+\sum_{i=1}^{m}u_iX_i(x_0)\mid
  (u_1,\ldots,u_m)\in\mathfrak{C}\}$~\cite[Proposition
  6.1]{sussmannlie1983}. It is worth mentioning that while a large body of the
  research on controllability is devoted to studying small-time local
  controllability of control systems at equilibrium
  points~\cite{MK:1990,sussmanna1978,sussmannlie1983,sus1987}, there
  are some deep and interesting results for small-time local controllability
  along non-stationary reference trajectories~\cite{bianchinicontrollability1993,HH:76}. 
\end{enumerate}
\end{rem}

While the trajectories of a control-affine
systems are constructed using measurable
controls $u:[0,t]\to \mathfrak{C}$, it is sometimes useful to work with
piecewise constant controls and their associated vector fields. 
\begin{definition}[\textbf{Piecewise constant control vector fields}]
Let $(\mathcal{X},\mathfrak{C})$ be a
$\mathrm{C}^{\nu}$ control-affine system with
$\mathcal{X}=\{X_0,X_1,\ldots,X_m\}$ and let 
$\mathbf{u}=(u_1,u_2,\ldots,u_m)\in \mathfrak{C}$. Then the vector
field of the control-affine system $\mathcal{X}$ associated to
$\mathbf{u}$ is $X_{\mathbf{u}}\in \Gamma^{\nu}(\R^n)$ defined by
\begin{align*}
X_{\mathbf{u}}=X_0+\sum_{i=1}^{m} u_iX_i.
\end{align*}
Let $p\in N$, $\mathbf{I}=(\mathbf{u}^1,\mathbf{u}^2,\ldots,\mathbf{u}^p)\in
\mathfrak{C}^p$ be a $p$-tuple of constant controls,
$\mathbf{t}=(t_1,\ldots,t_p)\in \R_{>0}^p$ be a $p$-tuple of
switching times. We define the piecewise constant
control vector field $X^{\mathbf{I},\mathbf{t}}$ by
\begin{equation*}
X^{\mathbf{I},\mathbf{t}}=
\begin{cases}
X_{\mathbf{u}^p}, & t\in [0,t_p],\\
X_{\mathbf{u}^{p-1}}, & t\in (t_{p},t_{p-1}+t_p],\\
\vdots & \vdots\\
X_{\mathbf{u}^1}, & t\in (t_2+\ldots,t_{p}, t_1+\ldots+t_p].
\end{cases}
\end{equation*}
\end{definition}
It clear that, for every $p$-tuple
$\mathbf{I}\in \mathfrak{C}^p$ and every $p$-tuple 
$\mathbf{t}\in\R^p_{>0}$ the vector field $X^{\mathbf{I},\mathbf{t}}$
is piecewise constant and the following property holds:
\begin{equation*}
\exp(|\mathbf{t}|X^{\mathbf{I},\mathbf{t}})(x)=\exp(t_1X_{\mathbf{u}^1})\scirc
\exp(t_2X_{\mathbf{u}^2})\scirc
\ldots\scirc\exp(t_pX_{\mathbf{u}^p})(x),\qquad\forall x\in \R^n.
\end{equation*}

Another notion relevant to small-time local controllability
of systems is normal reachability \cite{KAG:1992}. Normal reachability
has been first introduced and studied by Sussmann in \cite{HJS:1976}.
\begin{definition}[\textbf{Normal reachability}]
Let $(\mathcal{X},\mathfrak{C})$ be a $\mathrm{C}^{\nu}$ control-affine
system on $\R^n$ with $\mathcal{X}=\{X_0,\ldots,X_m\}$ and let
$x_1,x_0\in \R^n$. Then the point $x_1$ is normally reachable in time
less than $t$ from $x_0$, if the following conditions hold:
\begin{enumerate}
\item there exist $p\in \N$,
  $\mathbf{u}^1,\mathbf{u}^2,\ldots,\mathbf{u}^p\in \mathfrak{C}$, and $(s_1,s_2,\ldots,s_p)\in \R^p_{>0}$ such
that $s_1+s_2+\ldots+s_p<t$ and
\begin{equation*}
\exp(s_1 X_{\mathbf{u}^1})\scirc \exp(s_2 X_{\mathbf{u}^2})\scirc\ldots\scirc \exp(s_p X_{\mathbf{u}^p})(x_0)=x_1,
\end{equation*}
and 
\item there exists an open neighborhood of $(s_1,s_2,\ldots,s_p)$ in
$\R^p_{>0}$ such that the map
\begin{equation*}
(t_1,t_2,\ldots,t_p)\mapsto \exp(t_1 X_{\mathbf{u}^1})\scirc \exp(t_2 X_{\mathbf{u}^2})\scirc\ldots\scirc \exp(t_p X_{\mathbf{u}^p})(x_0)
\end{equation*}
is $\mathrm{C}^1$ and of rank $n$ on this neighborhood.
\end{enumerate}
\end{definition}
It is clear that, if the point $x_0$ is normally reachable from itself, the system is small-time locally
controllable from $x_0$. However, the converse is not true for
general control-affine systems~\cite[Example 3.9]{KAG:1992}. For real analytic systems, the connection between small-time local controllability
and normal reachability has been studied in \cite{KAG:1992}. In fact,
in \cite{KAG:1992}, it has been shown that for real analytic control
systems, small-time local controllability from $x_0$ implies that, for every time $t$, every point in the interior of the reachable set from
$x_0$ in times less than $t$ is normally reachable from $x_0$ \cite[Theorem 5.5 and Corollary 4.15]{KAG:1992}.
\begin{theorem}\label{th:1}
Let $(\mathcal{X},\mathfrak{C})$ be a  real analytic control-affine
system on $\R^n$ with $\mathcal{X}=\{X_0,X_1,\ldots,X_m\}$. If
$\mathcal{X}$ is small-time locally controllable from $x_0$ then, for every $t>0$, every point in the set
$\mathrm{int}\left(\mathrm{R}_{\mathcal{X}}(\le t,x_0)\right)$ is normally
  reachable in time less than $t$ form $x_0$.
\end{theorem}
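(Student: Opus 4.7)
The plan is to deduce Theorem~\ref{th:1} from the Sussmann--Jurdjevic accessibility theorem together with a density/continuation argument. Because $\mathcal{X}$ is small-time locally controllable from $x_0$ it is \emph{a fortiori} accessible from $x_0$, and because $\mathcal{X}$ is real analytic the Sussmann--Jurdjevic theorem~\cite[Corollary~4.7]{sussmancontrollability1972} then forces the Lie algebra condition $\dim\mathrm{Lie}(\mathcal{X})(x_0)=n$. By lower semicontinuity of the rank of a continuous distribution, this full-rank property persists on an open neighbourhood $U$ of $x_0$.

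With the full-rank condition in hand, I would first establish normal reachability of $x_0$ from itself in arbitrarily short time. Choosing iterated brackets in $\mathrm{Lie}(\mathcal{X})(x_0)$ that span $T_{x_0}\R^n$, one uses a Baker--Campbell--Hausdorff expansion to produce constant controls $\mathbf{u}^1,\ldots,\mathbf{u}^p\in[-1,1]^m$ and positive switching times $\bar t_1,\ldots,\bar t_p$ with arbitrarily small total length such that the endpoint map
\begin{align*}
\Phi_z(t_1,\ldots,t_p)=\exp(t_1X_{\mathbf{u}^1})\scirc\cdots\scirc\exp(t_p X_{\mathbf{u}^p})(z)
\end{align*}
has rank $n$ at $(\bar t_1,\ldots,\bar t_p)$; by continuity of partial derivatives the rank-$n$ property persists for base points $z$ in some open neighbourhood $U'\subseteq U$ of $x_0$.

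Given $y\in\mathrm{int}(\mathrm{R}_{\mathcal{X}}(<t,x_0))$, fix a measurable-control trajectory reaching $y$ in time $s<t$ and choose $\epsilon>0$ with $s+\epsilon<t$ and $\bar t_1+\cdots+\bar t_p<\epsilon$. By the classical density of piecewise-constant controls in reachable-set arguments for real analytic systems, one can truncate the trajectory slightly before $y$ and replace the prefix by a piecewise-constant trajectory ending at some $z\in U'$ arbitrarily close to the truncation point. Pre-composing $\Phi_z$ with this prefix produces a piecewise-constant endpoint map of rank $n$, whose image is therefore open in $\R^n$; since $y$ is interior to the reachable set, a short implicit-function-theorem argument lets one adjust the switching times so that the composed map lands \emph{exactly} at $y$, yielding the desired normal reachability witness.

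The main obstacle I expect is this final adjustment: arranging the composed piecewise-constant control to hit $y$ exactly while simultaneously keeping the Jacobian of rank $n$ at the chosen switching times. Real analyticity is indispensable here, because it confines the rank-drop locus of any such endpoint map to a proper analytic subvariety, and therefore permits generic choices of the prefix approximation and of the pivot point $z\in U'$ to avoid degeneracies. A purely $\mathrm{C}^\infty$ argument would have no such guarantee, which matches the well-known fact that an analogue of Theorem~\ref{th:1} fails in the smooth category~\cite[Example 3.9]{KAG:1992}.
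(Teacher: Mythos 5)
First, a point of orientation: the paper does not prove Theorem~\ref{th:1} at all --- it is quoted as a known result of Grasse \cite[Theorem 5.5 and Corollary 4.15]{KAG:1992}, so your proposal is being measured against that reference rather than against an argument in the text. Measured that way, your outline has two genuine gaps. The first is the step ``establish normal reachability of $x_0$ from itself in arbitrarily short time'' from the Lie algebra rank condition via a Baker--Campbell--Hausdorff expansion. The rank condition only yields normal \emph{accessibility}: it produces constant controls and positive times at which the endpoint map $\Phi_{x_0}$ has rank $n$, hence an open set of normally reachable points, but it gives no control over where $\Phi_{x_0}(\bar t_1,\ldots,\bar t_p)$ lands, and in general that open set does not contain $x_0$. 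The system $\dot x_1=u$, $\dot x_2=x_1^2$ satisfies the rank condition at the origin yet the origin is not normally self-reachable (it is not even in the interior of the reachable set). Your construction of the rank-$n$ block never invokes small-time local controllability, so it cannot possibly close this gap; and since smooth STLC systems need not be normally self-reachable \cite[Example 3.9]{KAG:1992}, analyticity must enter in a more structural way (Nagano/orbit-theorem arguments and the coincidence of piecewise-constant and measurable reachable sets in \cite{KAG:1992}) than merely confining rank-drop loci to analytic subvarieties.

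The second gap is the concatenation order and the final implicit-function-theorem step. You append the rank-$n$ block at the \emph{end}, based at a point $z$ near the truncation point of the trajectory to $y$ (note, incidentally, that you require $z\in U'$, a neighbourhood of $x_0$, while $z$ is supposed to be near $y$ --- these are incompatible unless $y$ is close to $x_0$). For the IFT to hit $y$ exactly, $y$ must lie in the open image of $\Phi_z$ near $\bar{\mathbf t}$; but for a system with drift the block displaces the state by $O(\epsilon)$ while the radius of the open image it sweeps out is typically $O(\epsilon^k)$ with $k\ge 2$ (the order of brackets needed to span $\R^n$), so the target escapes the image as $\epsilon\to 0$. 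This is exactly the accessibility-versus-controllability obstruction, and it is why the standard argument runs in the \emph{opposite} order: one places the normal block first, observes that normal reachability propagates forward under composition with flows of constant controls (these are diffeomorphisms, so rank $n$ is preserved), and then shows --- using STLC, analyticity, and the piecewise-constant approximation results of \cite{KAG:1992} --- that every $y\in\mathrm{int}(\mathrm{R}_{\mathcal{X}}(<t,x_0))$ is reachable by a piecewise-constant control from a point that is normally reachable from $x_0$ in arbitrarily small time. I would recommend either reconstructing the argument along those lines or simply citing \cite[Theorem 5.5 and Corollary 4.15]{KAG:1992} as the paper does.
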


Finally, we introduce the following assumption on the class of
$C^{\nu}$ control-affine systems. This assumption consists of two
parts: i) an assumption on the vector fields of the system, and ii) an
assumption on the control set of the system.

\begin{assumption}\label{asump}
We assume that the $C^{\nu}$ control-affine system
$(\mathcal{X},\mathfrak{C})$ satisfies the following condition: 
\begin{enumerate}[label=(\roman*)]
\item\label{p1:complete} for every $p\in \mathbb{Z}_{>0}$
    and every $p$-tuples $\mathbf{I}=(\mathbf{u}^1,\ldots,\mathbf{u}^p)\in\mathfrak{C}^p$ and
  $\mathbf{t}=(t^1,\ldots,t^p)\in \mathbb{R}^p_{>0}$, the piecewise constant
  control vector field $X^{\mathbf{I},\mathbf{t}}$ is complete;
\item\label{p2:unit} the control set $\mathfrak{C}\subseteq \R^m$ is the compact
  convex set $[-1,1]^m$.
\end{enumerate}
\end{assumption}
\begin{rem}In this remark we elaborate on each part of Assumption~\ref{asump}.
\begin{enumerate}[label=(\roman*)]
\item By considering convex and compact control sets, the Assumption~\ref{asump}\ref{p2:unit} is
  restrictive for studying small-time local controllability. It turns
  out that small-time local
  controllability of systems strongly depends on the structure of the control set
  (see~\cite{SJ-ADL:2014-2} for a detailed study of the role of
  control sets in fundamental properties of the systems);
\item If we assume that the control set $\mathfrak{C}$ is compact,
  then the completeness of vector fields in 
  Assumption~\ref{asump}\ref{p1:complete} is not
  restrictive for studying small-time local controllability. This is
  because of the fact that, for studying small-time local controllability of the system, one can
  always focus on a small neighborhood of $x_0$ and small
  times $t$, where the flows of all the piecewise constant vector fields of
  the system exist due to the compactness of $\mathfrak{C}$~\cite[Chapter 2, Theorem 1.1]{coddington1955theory}.
\end{enumerate}
\end{rem}


\section{Control variations}\label{sec:variations}

The notion of control variation is one of the fundamental tools in studying
reachable sets of control systems. Roughly speaking control variations
can be considered as the directions constructed using the trajectories
of the system, along which one can steer the
control system. By constructing appropriate control variations and
using a suitable open mapping theorem, one can show that a control system
is small-time locally controllable~\cite[Theorem 2.1]{HF:1989}. The first use of the notion of control variations for approximating reachable sets of control systems
can be traced back to the original work of Pontryagin and his
coworkers for studying the boundary of reachable sets~\cite{LSP:1964}. Since then, many different and technical notions of variations with
various properties have been proposed in the control
literature~\cite{bianchinicontrollability1993,
  HF:1989,MK:1990,AJK:1977}. In this section we study a
  general class of control variations introduced in \cite{HF:1987,HF:1989}. 
\begin{definition}[\textbf{Control variations}]\label{def:control_variations}
Let $k\in \N$ and $(\mathcal{X},\mathfrak{C})$ be a $\mathrm{C}^{\nu}$ control-affine system on $\R^n$. Then a vector $\mathbf{v}\in \R^n$ is called a variation
of $k$th order for the control-affine system $\mathcal{X}$ at the point $x_0$ if there exists a
parametrized family of points $\gamma:\R_{\ge 0}\to \R^{n}$ such that,
for every $t\in \R_{\ge 0}$, we have $\gamma(t)\in
\mathrm{R}_{\mathcal{X}}(\le t,x_0)$ and 
\begin{equation}\label{eq:2}
\gamma(t)=x_0+t^k\mathbf{v}+\mathcal{O}(t^{k+1}). 
\end{equation}
The set of all variations of $k$th order for the system $\mathcal{X}$
at the point $x_0$ is denoted by $\mathcal{K}^k_{\mathcal{X}}(x_0)$. We also
define the cone $\widehat{\mathcal{K}}^k_{\mathcal{X}}(x_0)$ by
\begin{align*}
\widehat{\mathcal{K}}^k_{\mathcal{X}}(x_0)=\bigcup_{\alpha\ge
  0}\alpha \left(\mathcal{K}^k_{\mathcal{X}}(x_0)\right).
\end{align*}
\end{definition}
Note that, in Definition~\ref{def:control_variations},
  there is no restriction on the regularity of the parametrized family
  of points $\gamma$. The next theorem shows how these control variations can be used to deduce the small-time local
controllability and the growth rate conditions for control-affine systems.


\begin{theorem}[\textbf{Characterization of growth rate condition}]\label{lem:characterization_growth_rate}
Let $(\mathcal{X},\mathfrak{C})$ be a $\mathrm{C}^{\nu}$ control-affine
system on $\R^n$ which satisfies Assumption~\ref{asump}, $x_0\in
\R^n$ be an equilibrium point for $\mathcal{X}$, and $N\in \N$. Then the following statements are equivalent:
\begin{enumerate}
\item[(i)] the control-affine system $\mathcal{X}$ satisfies the growth rate
  condition of order $N$;
\item[(ii)] $\widehat{\mathcal{K}}^N_{\mathcal{X}}(x_0)=\R^n$.
\end{enumerate}
\end{theorem}
\begin{proof}
$(i)$$\Rightarrow$$(ii)$: Since the control-affine system $(\mathcal{X},\mathfrak{C})$
  satisfies the growth rate condition of order $N$, there exists $T>0$ and $C>0$ such
  that, for every $t\in [0,T]$, we have $\overline{\mathrm{B}}(x_0,Ct^N)\subseteq
\mathrm{R}_{\mathcal{X}}(\le t,x_0)$. For every $\mathbf{v}\in
\mathbb{S}^{n-1}$, we define the family of points $\gamma_{\mathbf{v}}:[0,T]\to \overline{\mathrm{B}}(x_0,Ct^N)$ by
\begin{equation*}
\gamma_{\mathbf{v}}(t)=x_0+Ct^N\mathbf{v}.
\end{equation*}
By Definition~\ref{def:control_variations}, the vector
$\mathbf{v}$ is control variations of order $N$ for the system $\mathcal{X}$ at
the point $x_0$. This means that we have $\widehat{\mathcal{K}}^N_{\mathcal{X}}(x_0)=\R^n$ and therefore $(ii)$ holds.

$(ii)$$\Rightarrow$ $(i)$: The proof of this part follows from~\cite[Corollary 2.5]{MK:1990}.
\end{proof}
Roughly speaking, Theorem~\ref{lem:characterization_growth_rate} states that small-time local
controllability of a system is checkable using variations of order
$N$ if and only if the system satisfies the growth rate condition of order
$N$. Note that this result holds for control-affine systems in any
regularity class $\mathrm{C}^{\nu}$.

\section{Polynomial perturbations of real analytic control-affine systems}\label{sec:perturbations}
In this section, we focus on the class of real analytic control-affine systems
which satisfy the growth rate condition of order $N$. For a control
system in this class, we construct a multi-valued mapping to study the effect of perturbation of
vector fields of the system on its reachable sets. Let $(\mathcal{X}=\{X_0,X_1,\ldots,X_m\},\mathfrak{C})$ be a
real analytic control-affine system which satisfies the growth rate condition of
order $N$ at the point $x_0$ and let $(\mathcal{Y}=\{Y_0,Y_1,\ldots,Y_m\},\mathfrak{C})$ be another real
analytic control-affine system which we consider as the perturbed control
system. Moreover, we assume that both control-affine
  systems $(\mathcal{X},\mathfrak{C})$ and $(\mathcal{Y},\mathfrak{C})$ satisfy Assumption~\ref{asump}. Since
$(\mathcal{X},\mathfrak{C})$ satisfies the growth rate condition of order $N$, there exists $C,T>0$ such that 
\begin{align*}
\overline{\mathrm{B}}(x_0,Ct^N)\subseteq \mathrm{R}_{\mathcal{X}}(\le t,x_0),\qquad\forall t\le T.
\end{align*}
For every $t\in [0,T]$, we define a perturbation mapping $F^t_{\mathcal{X},\mathcal{Y}}:\overline{\mathrm{B}}(x_0,\tfrac{C}{2}t^N)
\rightrightarrows \mathrm{R}_{\mathcal{Y}}(\le t,x_0)$ which captures the
transition from reachable sets of $(\mathcal{X},\mathfrak{C})$ to the reachable sets
of $(\mathcal{Y},\mathfrak{C})$. The multi-valued mapping
$F^t_{\mathcal{X},\mathcal{Y}}$ is defined as composition of two mappings
$\xi^t_{\mathcal{Y}}$ and $\eta^t_{\mathcal{X}}$ as follows:
\begin{equation*}
F^t_{\mathcal{X},\mathcal{Y}}(x)=\xi^t_{\mathcal{Y}}\scirc
\eta^t_{\mathcal{X}}(x),\qquad\forall x\in \overline{\mathrm{B}}(x_0,\tfrac{C}{2}t^N).
\end{equation*}
The idea is that the mapping $\eta^t_{\mathcal{X}}$ takes a point in the closed
ball $\overline{\mathrm{B}}(x_0,\tfrac{C}{2}t^N)$ and gives the
switching times associated to that point and the mapping
$\xi^t_{\mathcal{Y}}$ takes a set of switching times and gives the
associated point in the reachable sets of the control system $(\mathcal{Y},\mathfrak{C})$.

We start by rigorously constructing the multi-valued mapping
$\eta^{t}_{\mathcal{X}}$. Since $(\mathcal{X},\mathfrak{C})$ satisfies
Assumption~\ref{asump} and the growth rate condition of order $N$, we have
$\mathfrak{C}=[-1,1]^m$ and $\overline{\mathrm{B}}(x_0,Ct^N)\subseteq
\mathrm{R}_{\mathcal{X}}(\le t,x_0)$, for every $t\le T$. This implies
that, for every $t\le T$, we get 
\begin{align*}
\overline{\mathrm{B}}(x_0,\tfrac{C}{2}t^N)\subset
  \mathrm{B}(x_0,Ct^N)\subseteq
  \mathrm{int}(\mathrm{R}_{\mathcal{X}}(\le t,x_0))
\end{align*}
By Theorem~\ref{th:1}, for every $x\in \overline{\mathrm{B}}(x_0,\tfrac{C}{2}t^N)$, there exist $p_x\in \N$,
$s_1,s_2,\ldots,s_{p_x}\in \R_{>0}$, and $\mathbf{w}^1,\mathbf{w}^2,\ldots,\mathbf{w}^{p_x}\in
[-1,1]^m$ such that $s_1+s_2+\ldots+s_{p_x}<t$ and
\begin{equation*}
\exp(s_1X_{\mathbf{w}^1})\scirc \exp(s_2X_{\mathbf{w}^2})\scirc\ldots\scirc \exp(s_{p_x} X_{\mathbf{w}^{p_x}})(x_0)=x.
\end{equation*}
Moreover, there exists an open neighborhood $V$ of $(s_1,s_2,\ldots,s_{p_x})$ in
$\R^{p_x}_{>0}$ such that the map $\xi^x_{\mathcal{X}}:V\to \R^n$, defined by
\begin{equation*}
\xi^x_{\mathcal{X}}(t_1,t_2,\ldots,t_{p_x})=\exp(t_1X_{\mathbf{w}^1})\scirc \exp(t_2X_{\mathbf{w}^2})\scirc\ldots\scirc \exp(t_{p_x} X_{\mathbf{w}^{p_x}})(x_0)
\end{equation*}
is $\mathrm{C}^ 1$ and of rank $n$ on $V$. Without loss of generality we can assume that,
for every $(t_1,t_2,\ldots,t_{p_x})\in V$, we have
\begin{equation*}
t_1+t_2+\ldots+t_{p_x}\le t.
\end{equation*}
By \cite[Lemma 2.2]{IK-PWM-JS:1993}, there exists a submanifold $M_x$
of $V$ containing $(s_1,s_2,\ldots,s_{p_x})$ such that $\xi^x_{\mathcal{X}}(M_x)$ is an open neighborhood of
$x$ in $\R^n$ and $\xi^x_{\mathcal{X}}\mid_{M_x}$ is a
$\mathrm{C}^{1}$-diffeomorphism. Let $S_x$ be an open neighborhood of
$(s_1,s_2,\ldots,s_{p_x})$ in $M_x$ such that $\overline{S}_{x}\subseteq
M_x$. Since $\overline{\mathrm{B}}(x_0,\tfrac{C}{2}t^N)$ is compact and, for
every $x\in \overline{\mathrm{B}}(x_0,\tfrac{C}{2}t^N)$, the set $\xi^{x}_{\mathcal{X}}(S_{x})$ is open in $\R^n$, there exists
$x_1,x_2,\ldots,x_r\in \overline{\mathrm{B}}(x_0,\tfrac{C}{2}t^N)$ such that 
\begin{equation*}
\overline{\mathrm{B}}(x_0,\tfrac{C}{2}t^N)\subseteq \bigcup_{i=1}^{r} \xi^{x_i}_{\mathcal{X}}(S_{x_i}).
\end{equation*} 
Now let us define $p=p_{x_1}+p_{x_2}+\ldots+p_{x_r}$ and let
$(\mathbf{u}^1,\mathbf{u}^2,\ldots,\mathbf{u}^p)$ be the ordered set obtained by concatenation
of the controls $(\mathbf{w}^1,\mathbf{w}^2,\ldots,\mathbf{w}^{p_{x_k}})$ for $1\le k\le r$. Since,
$\R^{p_{x_i}}\subseteq \R^{p}$, for every $i\in\{1,2,\ldots,r\}$, one can consider $S_{x_i}$ as a submanifold of $\R^p$. We define the multi-valued map
$\eta^t_{\mathcal{X}}:\overline{\mathrm{B}}(x_0,\tfrac{C}{2}t^N)\rightrightarrows \bigcup_{i=1}^{r} \overline{S}_{x_i}$ as
\begin{equation*}
\eta^t_{\mathcal{X}}(x)=\bigcup_{i\in
  \{1,2,\ldots,r\}}\left\{\left(\xi^{x_i}_{\mathcal{X}}\right)^{-1}(x)\suchthat
x\in \xi^{x_i}_{\mathcal{X}}(\overline{S}_{x_i})\right\},\qquad\forall
x\in \overline{\mathrm{B}}(x_0,\tfrac{C}{2}t^N).
\end{equation*}
Note that, for every $x\in \overline{\mathrm{B}}(x_0,\tfrac{C}{2}t^N)$, the number of
elements in $\eta^t_{\mathcal{X}}(x)$ is at most $r$.

The next step is to construct the single-valued mapping
$\xi^t_{\mathcal{Y}}:\R^p\to \mathrm{R}_{\mathcal{Y}}(\le t,x_0)$. We
define the map $\xi^t_{\mathcal{Y}}:\bigcup_{i=1}^{r}
\overline{S}_{x_i} \to \R^n$ by
\begin{equation*}
\xi^t_{\mathcal{Y}}(t_1,t_2,\ldots,t_p)=\exp(t_1Y_{\mathbf{u}^1})\scirc \exp(t_2Y_{\mathbf{u}^2})\scirc\ldots\scirc \exp(t_{p} Y_{\mathbf{u}^p})(x_0).
\end{equation*}
Then the multi-valued map
$F^t_{\mathcal{X},\mathcal{Y}}:\overline{\mathrm{B}}(x_0,\tfrac{C}{2}t^N)\rightrightarrows
\R^n$ is given by
\begin{equation*}
F^t_{\mathcal{X},\mathcal{Y}}(x)=\xi^t_{\mathcal{Y}}\scirc \eta^t_{\mathcal{X}}(x).
\end{equation*}
One can observe that the mapping $F^t_{\mathcal{X},\mathcal{Y}}$ is
finite-valued and has the following regularity properties.

\begin{theorem}[\textbf{Regularity of the perturbation mapping}]\label{lem:multi-valued_map}
Let $(\mathcal{X} = \{X_0,\ldots,X_m\},\mathfrak{C})$ and
$(\mathcal{Y}=\{Y_0,\ldots,Y_m\},\mathfrak{C})$ be two real analytic
control-affine system which satisfy Assumption~\ref{asump}. Suppose that, for $x_0\in \mathbb{R}^n$, there exists $N\in \N$ such that 
\begin{enumerate}[label=(\alph*)]
\item the system $(\mathcal{X},\mathfrak{C})$ satisfies
  the growth rate condition of order $N$,
\item for every $i\in \{0,\ldots,m\}$, the vector fields $X_i$ and $Y_i$
have the same Taylor polynomial of order $N$ around $x_0$.
\end{enumerate}
For every $t\in [0,T]$, let the map $F^t_{\mathcal{X},\mathcal{Y}}:\overline{\mathrm{B}}(x_0,\tfrac{C}{2}t^N)\to
\mathrm{R}_{\mathcal{Y}}(\le t, x_0)$ be defined as above. Then the following statements hold:
\begin{enumerate}[label=(\roman*)]
\item\label{p1:reg_spatial}  For every $t\in [0,T]$ and every $x\in
  \overline{\mathrm{B}}(x_0,\tfrac{C}{2}t^N)$, there exist a positive integer $l\in \N$, a neighborhood $W$ containing $x$, and
continuous functions $f^1,f^2,\ldots,f^l:W\to \R^n$ such that
\begin{equation*}
\left\{f^1(y)\right\}\subseteq F^t_{\mathcal{X},\mathcal{Y}}(y)\subseteq\left\{f^1(y), f^2(y), \ldots,
  f^l(y)\right\},\qquad\forall y\in W.
\end{equation*}
\item\label{p2:reg_time} there exist $\alpha>0$ and $T_{\min}\in (0,T)$ such that, for every 
  $t\le T_{\min}$ and every $x\in \overline{\mathrm{B}}(x_0,\tfrac{C}{2}t^N)$, we have
\begin{equation*}
\|y-x\|\le \alpha t^{N+1},\qquad \forall y\in F^t_{\mathcal{X},\mathcal{Y}}(x).
\end{equation*}
\end{enumerate}
\end{theorem}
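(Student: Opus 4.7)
The plan is to prove parts \ref{p1:reg_spatial} and \ref{p2:reg_time} separately. Part \ref{p1:reg_spatial} is essentially a topological unpacking of the construction in this section; part \ref{p2:reg_time} is the analytic core, which I would handle through the chronological-calculus estimates of the previous section.

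For part \ref{p1:reg_spatial}, fix $t\in[0,T]$ and $x\in\overline{\mathrm{B}}(x_0,\tfrac{C}{2}t^N)$, and let $I(x)=\{i\in\{1,\ldots,r\}\mid x\in\xi^{x_i}_{\mathcal{X}}(\overline{S}_{x_i})\}$. The plan rests on three observations. First, for $i\notin I(x)$, the set $\xi^{x_i}_{\mathcal{X}}(\overline{S}_{x_i})$ is closed (image of a compact set under a continuous map), so a neighbourhood of $x$ avoids it. Second, for $i\in I(x)$, the map $\xi^{x_i}_{\mathcal{X}}|_{M_{x_i}}$ is a $\mathrm{C}^1$-diffeomorphism onto an open set containing $x$, so $(\xi^{x_i}_{\mathcal{X}})^{-1}$ is defined and continuous on a neighbourhood of $x$. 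Third, the covering property $\overline{\mathrm{B}}(x_0,\tfrac{C}{2}t^N)\subseteq\bigcup_i\xi^{x_i}_{\mathcal{X}}(S_{x_i})$, with each $\xi^{x_i}_{\mathcal{X}}(S_{x_i})$ open, guarantees some $i_0\in I(x)$ with $x\in\xi^{x_{i_0}}_{\mathcal{X}}(S_{x_{i_0}})$. Intersecting these finitely many neighbourhoods yields a single $W$ on which I set $f^1:=\xi^t_{\mathcal{Y}}\scirc(\xi^{x_{i_0}}_{\mathcal{X}})^{-1}$ (the preferred continuous selection, which always lies in $F^t_{\mathcal{X},\mathcal{Y}}(y)$) and the $f^k$ for $k\ge 2$ analogously for the remaining indices in $I(x)$. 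Both inclusions then follow directly from the definition of $F^t_{\mathcal{X},\mathcal{Y}}$.

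For part \ref{p2:reg_time}, the plan is to insert the $N$th Picard iterate as an intermediate term and exploit both the chronological-calculus estimates and the $N$th contact hypothesis. Any $y\in F^t_{\mathcal{X},\mathcal{Y}}(x)$ arises from switching times $\mathbf{t}=(t_1,\ldots,t_p)\in\eta^t_{\mathcal{X}}(x)$ with $|\mathbf{t}|<t$ and from a control tuple $\mathbf{I}=(\mathbf{u}^1,\ldots,\mathbf{u}^p)$ determined by the construction, so that for each coordinate function $x^i$,
\begin{equation*}
\mathrm{ev}_{x_0}\scirc\widehat{\exp}(|\mathbf{t}|X^{\mathbf{I},\mathbf{t}})(x^i)=x^i(x),\qquad \mathrm{ev}_{x_0}\scirc\widehat{\exp}(|\mathbf{t}|Y^{\mathbf{I},\mathbf{t}})(x^i)=x^i(y).
\end{equation*}
Because $[-1,1]^m$ is compact and the maps $\mathbf{u}\mapsto X_{\mathbf{u}}$ and $\mathbf{u}\mapsto Y_{\mathbf{u}}$ are affine, both families $\{X_{\mathbf{u}}\}$ and $\{Y_{\mathbf{u}}\}$ lie in a single bounded subset $B\subseteq\Gamma^\omega(\R^n)$, and hence $X^{\mathbf{I},\mathbf{t}},Y^{\mathbf{I},\mathbf{t}}\in\mathrm{L}^\infty([0,|\mathbf{t}|];B)$ uniformly in $(\mathbf{I},\mathbf{t})$. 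Corollary \ref{cor:estimate_picard_iterations} then supplies constants $M,L>0$ and $\overline{T}\le T_B$, independent of the particular $(\mathbf{I},\mathbf{t})$, such that
\begin{equation*}
\bigl|x^i(x)-\mathrm{ev}_{x_0}\scirc\widehat{\exp}_N(|\mathbf{t}|X^{\mathbf{I},\mathbf{t}})(x^i)\bigr|\le \tfrac{M|\mathbf{t}|^{N+1}L}{1-M|\mathbf{t}|},
\end{equation*}
and the analogous estimate holds for $y$. Theorem \ref{kth_contact_estimate_flow}, invoking precisely the $N$th contact hypothesis, equates the two $N$th Picard iterates. A coordinatewise triangle inequality, together with the choice $T_{\min}\le\overline{T}$ satisfying $MT_{\min}\le\tfrac{1}{2}$, then yields $\|y-x\|\le\alpha t^{N+1}$ with $\alpha$ depending only on $n$, $M$, and $L$.

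The main obstacle I anticipate is securing uniformity of $M,L,\overline{T}$ across every admissible triple $(x,\mathbf{I},\mathbf{t})$ that arises as $x$ varies over $\overline{\mathrm{B}}(x_0,\tfrac{C}{2}t^N)$. The number of switches $p$ and the control tuple $\mathbf{I}$ both depend on $x$, but $p$ is bounded via the finite cover constructed earlier, and the collection of all vector fields $X_{\mathbf{u}},Y_{\mathbf{u}}$ for $\mathbf{u}\in[-1,1]^m$ forms a bounded subset of $\Gamma^\omega(\R^n)$ in the $\mathrm{C}^\omega$-topology, so the bounded-set hypothesis of Corollary \ref{cor:estimate_picard_iterations} is met uniformly. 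Real analyticity enters twice: it underlies Theorem \ref{th:1} used to obtain the finite cover, and it is essential for the seminorm estimates of Corollary \ref{cor:estimate_picard_iterations} in the $\mathrm{C}^\omega$-topology.
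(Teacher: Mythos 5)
Your proposal is correct and follows essentially the same route as the paper's proof: part 1 by separating the indices $i$ with $x\in\xi^{x_i}_{\mathcal{X}}(\overline{S}_{x_i})$ from the rest, using closedness of the latter images and openness of $\xi^{x_{i_0}}_{\mathcal{X}}(S_{x_{i_0}})$ to build $W$ and the continuous selections $f^i=\xi^t_{\mathcal{Y}}\scirc(\xi^{x_i}_{\mathcal{X}})^{-1}$; part 2 by inserting the $N$th iterate $\widehat{\exp}_N$, invoking boundedness of $\{X_{\mathbf{u}}\}\cup\{Y_{\mathbf{u}}\}$, Corollary~\ref{cor:estimate_picard_iterations}, Theorem~\ref{kth_contact_estimate_flow}, and a coordinatewise triangle inequality. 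Your uniformity concern is resolved exactly as in the paper, since the constants from the corollary depend only on the bounded set $B$ and not on the particular $(\mathbf{I},\mathbf{t})$.
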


\begin{proof}
Regarding part~\ref{p1:reg_spatial}, suppose that, for every $i\in
\{1,2,\ldots, r\}$, the map
$\xi^{x_i}_{\mathcal{X}}$ and the manifold $S_{x_i}$ are defined as
above. Since $\overline{\mathrm{B}}(x_0,\tfrac{C}{2}t^N)\subseteq
\bigcup_{i=1}^{r} \xi^{x_i}_{\mathcal{X}}(S_{x_i})$, we have $x\in \bigcup_{i=1}^{r} \xi^{x_i}_{\mathcal{X}}(S_{x_i})$. Without loss of
generality, we can assume that
\begin{equation*}
x\in \xi^{x_1}_{\mathcal{X}}(S_{x_1}).
\end{equation*}
Note that $\xi^{x_1}_{\mathcal{X}}(S_{x_1})$ is open in $\R^n$. Therefore, there exists a
neighborhood $U$ of $x$ such that $U\subseteq
\xi^{x_1}_{\mathcal{X}}(S_{x_1})$. On the other hand, since we have $\overline{\mathrm{B}}(x_0,\tfrac{C}{2}t^N)\subseteq
\bigcup_{i=1}^{r} \xi^{x_i}_{\mathcal{X}}(\overline{S}_{x_i})$, without loss of generality, we
can assume that there exists $l\in \{1,2,\ldots,r\}$ such that
\begin{eqnarray*}
x\in \xi^{x_i}_{\mathcal{X}}(\overline{S}_{x_i}), &\quad i\in\{1,2,\ldots,l\},\\
x\not\in \xi^{x_i}_{\mathcal{X}}(\overline{S}_{x_i}), &\quad  i\in\{l+1, l+2,\ldots,r\}.
\end{eqnarray*}
For every $i\in\{l+1,l+2,\ldots,r\}$, the set
$\xi^{x_i}_{\mathcal{X}}(\overline{S}_{x_i})$ is closed in $\R^n$. Therefore,
\begin{equation*}
\bigcup_{i=l+1}^{r} \xi^{x_i}_{\mathcal{X}}(\overline{S}_{x_i})
\end{equation*}
is closed in $\R^n$. Moreover, we know that $x\not\in \bigcup_{i=l+1}^{s}
\xi^{x_i}_{\mathcal{X}}(\overline{S}_{x_i})$. This implies that there exists a
neighborhood $V$ of $x$ such that
\begin{equation*}
V\cap \left(\bigcup_{i=l+1}^{r} \xi^{x_i}_{\mathcal{X}}(\overline{S}_{x_i})\right)=\emptyset.
\end{equation*}
Note that, for every $i\in \{1,2,\ldots,l\}$,
$\overline{S}_{x_i}\subseteq M_{x_i}$. Therefore, for every $i\in \{1,2,\ldots,l\}$, we have
$\xi^{x_i}_{\mathcal{X}}(\overline{S}_{x_i})\subseteq
\xi^{x_i}_{\mathcal{X}}(M_{x_i})$. Since $x\in
\xi^{x_i}_{\mathcal{X}}(M_{x_i})$, for every $i\in \{1,2,\ldots,l\}$, the
set $\bigcap_{i=1}^{l}\xi^{x_i}_{\mathcal{X}}(M_{x_i})$ is nonempty. We set 
\begin{equation*}
W=\left(\bigcap_{i=1}^{l}\xi^{x_i}_{\mathcal{X}}(M_{x_i})\right)\cap V\cap U.
\end{equation*} 
For every $i\in \{1,2,\ldots,l\}$, we define the function $f^i:W\to
\R^n$ as 
\begin{equation*}
f^i(y)=\xi^t_{\mathcal{Y}}\scirc \left(\xi^{x_i}_{\mathcal{X}}\right)^{-1}(y),
\qquad\forall y\in W.
\end{equation*}
Note that, for every $i\in \{1,2,\ldots,l\}$, the map $\xi^{x_i}_{\mathcal{X}}$
is a $\mathrm{C}^{1}$-diffeomorphism on $M_{x_i}$. Therefore, for every $i\in
\{1,2,\ldots,l\}$, the map $f^i:W\to \R^n$ is continuous. Now, it is
clear from the definition of $F^t_{\mathcal{X},\mathcal{Y}}$ that we have
\begin{equation*}
F^t_{\mathcal{X},\mathcal{Y}}(t,x)=\left\{f^1(x), f^2(x), \ldots, f^l(x)\right\}.
\end{equation*}
Since $W\subseteq V$ and $V$ is chosen such that $V\cap
\left(\bigcup_{i=l+1}^{r}
  \xi^{x_i}_{\mathcal{X}}(\overline{S}_{x_i})\right)=\emptyset$, for every $i\in\{l+1,l+2,\ldots,r\}$ and every
$y\in W$, we have
\begin{equation*}
\xi^t_{\mathcal{Y}}\scirc \left(\xi^{x_i}_{\mathcal{X}}\right)^{-1}(y)\not\in
F^t_{\mathcal{X}, \mathcal{Y}}(y). 
\end{equation*}
Thus, we have 
\begin{equation*}
F^t_{\mathcal{X},\mathcal{Y}}(y)\subseteq\left\{f^1(y), f^2(y), \ldots,
  f^l(y)\right\},\qquad\forall y\in W.
\end{equation*}
Finally, since $W\subseteq U$, and $U$ is chosen such that $U\subseteq \xi^{x_1}_{\mathcal{X}}(S_{x_1})$, 
for every $y\in W$, 
\begin{equation*}
\xi^t_{\mathcal{Y}}\scirc \left(\xi^{x_1}_{\mathcal{X}}\right)^{-1}(y)\in
F^t_{\mathcal{X}, \mathcal{Y}}(y). 
\end{equation*}
Therefore, for every $y\in W$, we have $f^{1}(y)\in F^t_{\mathcal{X},
  \mathcal{Y}}(y)$. 

Regarding part~\ref{p2:reg_time}, note that the system
$(\mathcal{X},\mathfrak{C})$ satisfies Assumption~\ref{asump} and
therefore, we have $\mathfrak{C}=[-1,1]^m$. We define the set $B\subseteq
\Gamma^{\omega}(\R^n)$ by
\begin{align*}
B = \left\{X_{\mathbf{u}}\mid \mathbf{u}\in [-1,1]^m\right\}\bigcup \left\{Y_{\mathbf{u}}\mid \mathbf{u}\in [-1,1]^m\right\}.
\end{align*}
Note that, for every $\mathbf{u}\in [-1,1]^m$, every compact set $K$,
every $C^{\omega}$-function $f$, and every $\mathbf{a}\in c^{\downarrow}_0$, we have 
\begin{align*}
\rho^{\omega}_{K,\mathbf{a},f}(X_{\mathbf{u}})=\rho^{\omega}_{K,\mathbf{a},f}(X_0+\sum_{i=1}^{m}u_iX_i)\le
  (m+1)\max\{\rho^{\omega}_{K,\mathbf{a},f}(X_i)\mid i\in \{0,1,\ldots,m\}\}.
\end{align*}
Similarly, for every $\mathbf{u}\in [-1,1]^m$, every compact set $K$,
every $C^{\omega}$-function $f$, and every $\mathbf{a}\in c^{\downarrow}_0$, we have 
\begin{align*}
\rho^{\omega}_{K,\mathbf{a},f}(Y_{\mathbf{u}})=\rho^{\omega}_{K,\mathbf{a},f}(Y_0+\sum_{i=1}^{m}u_iY_i)\le
  (m+1)\max\{\rho^{\omega}_{K,\mathbf{a},f}(Y_i)\mid i\in \{0,1,\ldots,m\}\}.
\end{align*}
For every $i\in \{0,1,\ldots,m\}$ we define
$L_i=\max\{\rho^{\omega}_{K,\mathbf{a},f}(X_i),\rho^{\omega}_{\mathbf{a},K,f}(Y_i)\}$. Thus,
if we define the constant $L\in \R_{>0}$ by
\begin{align*}
L=(m+1)\max\{L_i\mid i\in \{0,1,\ldots,m\}\},
\end{align*}
then we have 
\begin{align*}
\rho^{\omega}_{K,\mathbf{a},f}(v)\le L,\qquad\forall v\in B.
\end{align*}
By Theorem~\ref{thm:boundedset_locallyconvex}, this implies that the set $B$ is bounded in $\Gamma^{\omega}(\R^n)$. Using Theorem
\ref{cor:estimate_picard_iterations}, there exist $M,L>0$ and $\overline{T}<T_B$ such that, for every $t\in (0,\overline{T}]$ and every real analytic
vector field $Z\in\mathrm{PC}([0,\overline{T}] ;B)$, we have
\begin{equation*}
\left\|\mathrm{ev}_{x_0}\scirc\exp(tZ)(x^i)-\mathrm{ev}_{x_0}\scirc\exp_{N}(tZ)(x^i)\right\|\le
\frac{\left(Mt\right)^{N+1}}{1-Mt} L,
\end{equation*}
We set $\alpha=\sqrt{n}M^{N+1}L$ and $T_{\min}=\min\{\overline{T},T\}$
Let $t\in [0,T_{\min}]$ and $x\in
\overline{\mathrm{B}}(x_0,\tfrac{C}{2}t^N)$. If $y\in F^t_{\mathcal{X},\mathcal{Y}}(x)$, then there exist
$\mathbf{I}=(\mathbf{u}^1,\mathbf{u}^2,\ldots,\mathbf{u}^p)\in ([-1,1]^m)^p$
and $\mathbf{t}=(t_1,t_2,\ldots,t_p)\in \R^p_{\ge 0}$ such that
$|\mathbf{t}|< t$ and
\begin{eqnarray*}
 x&=&\exp(t_1X_{\mathbf{u}^1})\scirc \exp(t_2X_{\mathbf{u}^2})\scirc \ldots \scirc \exp(t_pX_{\mathbf{u}^p})(x_0)=\exp(|\mathbf{t}|X^{\mathbf{I},\mathbf{t}}),\\
y&=&\exp(t_1Y_{\mathbf{u}^1})\scirc \exp(t_2Y_{\mathbf{u}^2})\scirc \ldots \scirc \exp(t_pY_{\mathbf{u}^p})(x_0)=\exp(|\mathbf{t}|Y^{\mathbf{I},\mathbf{t}}).
\end{eqnarray*}
Note that, for every $\mathbf{k}\in \mathbb{Z}^n_{\ge 0}$ with the
property that $|\mathbf{k}|\le N$, we have
\begin{equation*}
D^{\mathbf{k}} X_i(x_0)=D^{\mathbf{k}} Y_i(x_0),\qquad i\in \{1,\ldots,m\}.
\end{equation*}
Note that the $N$th iteration in the sequence of flow iteration~\eqref{eq:Picards_iterations}
  only depends on the derivatives up to order $N$ of the vector field $X$. This implies that,
\begin{equation*}
\mathrm{ev}_{x_0}\scirc\exp_N(|\mathbf{t}|X^{\mathbf{I},\mathbf{t}})(x^i)=\mathrm{ev}_{x_0}\scirc\exp_N(|\mathbf{t}|Y^{\mathbf{I},\mathbf{t}})(x^i).
\end{equation*}
Thus we have 
\begin{multline*}
\left\|\mathrm{ev}_{x_0}\scirc\exp(|\mathbf{t}|X^{\mathbf{I},\mathbf{t}})(x^i)-\mathrm{ev}_{x_0}\scirc\exp(|\mathbf{t}|Y^{\mathbf{I},\mathbf{t}})(x^i)\right\|\\\le
\left\|\mathrm{ev}_{x_0}\scirc\exp(|\mathbf{t}|X^{\mathbf{I},\mathbf{t}})(x^i)-\mathrm{ev}_{x_0}\scirc\exp_N(|\mathbf{t}|X^{\mathbf{I},\mathbf{t}})(x^i)\right\|\\+\left\|\mathrm{ev}_{x_0}\scirc\exp(|\mathbf{t}|Y^{\mathbf{I},\mathbf{t}})(x^i)-\mathrm{ev}_{x_0}\scirc\exp_N(|\mathbf{t}|Y^{\mathbf{I},\mathbf{t}})(x^i)\right\|.
\end{multline*}
Since $X^{\mathbf{I},\mathbf{t}}, Y^{\mathbf{I},\mathbf{t}}\in
\mathrm{PC}([0,|\mathbf{t}|]; B)$, we have
\begin{eqnarray*}
\left\|\mathrm{ev}_{x_0}\scirc\exp(|\mathbf{t}|X^{\mathbf{I},\mathbf{t}})(x^i)-\mathrm{ev}_{x_0}\scirc\exp_N(|\mathbf{t}|X^{\mathbf{I},\mathbf{t}})(x^i)\right\|\le
  \frac{\left(Mt\right)^{N+1}}{1-Mt} L,\\
\left\|\mathrm{ev}_{x_0}\scirc\exp(|\mathbf{t}|Y^{\mathbf{I},\mathbf{t}})(x^i)-\mathrm{ev}_{x_0}\scirc\exp_{N}(|\mathbf{t}|Y^{\mathbf{I},\mathbf{t}})(x^i)\right\|
\le \frac{\left(Mt\right)^{N+1}}{1-Mt} L.
\end{eqnarray*}
Thus, we have
\begin{multline*}
\left\|\mathrm{ev}_{x_0}\scirc\exp(|\mathbf{t}|X^{\mathbf{I},\mathbf{t}})(x^i)-\mathrm{ev}_{x_0}\scirc\exp(|\mathbf{t}|Y^{\mathbf{I},\mathbf{t}})(x^i)\right\|\\\le
2\frac{\left(Mt\right)^{N+1}}{1-Mt} L,\qquad\forall i\in \{1,2,\ldots,n\}.
\end{multline*}
Therefore, we have
\begin{equation*}
\left\|\exp(|\mathbf{t}|X^{\mathbf{I},\mathbf{t}})(x_0)-\exp(|\mathbf{t}|Y^{\mathbf{I},\mathbf{t}})(x_0)\right\|\le
2\sqrt{n}\frac{\left(Mt\right)^{N+1}}{1-Mt} L.
\end{equation*}
Note that, by our choice of $t_1,t_2,\ldots,t_p\in \R_{>0}$, we have
\begin{eqnarray*}
y&=&\exp(|\mathbf{t}|Y^{\mathbf{I},\mathbf{t}})(x_0),\\
x&=&\exp(|\mathbf{t}|X^{\mathbf{I},\mathbf{t}})(x_0).
\end{eqnarray*}
Note that, by the definition, we have $\alpha=\sqrt{n}M^{N+1}L$ and
$Mt\le M\overline{T}\le\frac{1}{2}$. This implies that 
\begin{equation*}
\left\|y-x\right\|\le \alpha t^{N+1}.
\end{equation*}
Since $\alpha$ does not depend on $t$, the above inequality holds for
every $t\in [0,T_{\min}]$.
\end{proof}

\begin{rem}
Some remarks are in order. 
\begin{enumerate}[label=(\roman*)]
\item Part~\ref{p1:reg_spatial} of
Theorem~\ref{lem:multi-valued_map} can be considered as a 
regularity result for the finite-valued mapping $x\rightrightarrows
F^t_{\mathcal{X},\mathcal{Y}}(x)$ and part~\ref{p2:reg_time}
of Theorem~\ref{lem:multi-valued_map} can be considered as a regularity result for the
finite-valued mapping $t\rightrightarrows
F^t_{\mathcal{X},\mathcal{Y}}(x)$; 
\item For the proof of part~\ref{p1:reg_spatial} of Theorem~\ref{lem:multi-valued_map}, it is
not required to assume that the family of vector fields $\mathcal{X}$ and $\mathcal{Y}$ have
the same Taylor polynomials of order $N$ around the point $x_0$. In
fact, Theorem~\ref{lem:multi-valued_map}\ref{p1:reg_spatial} is true
for any arbitrary perturbation of the real analytic control
system $(\mathcal{X},\mathfrak{C})$ satisfying Assumption~\ref{asump}. However, this condition is essential for the
proof of Theorem~\ref{lem:multi-valued_map}\ref{p2:reg_time}.
\end{enumerate}
\end{rem}

\section{The main theorem}\label{sec:main_result}

In this section we prove the main result of this paper, which can be
considered as a robustness of the growth rate condition of
order $N$ with respect to polynominal perturbations of order higher than $N$. Roughly
speaking our main result states that, given a real analytic system $(\mathcal{X},\mathfrak{C})$ which
satisfies the growth rate condition of order $N$ at the point $x_0$, if we
perturb the vector fields of $(\mathcal{X},\mathfrak{C})$ around $x_0$ by Taylor polynomials
of order higher than $N$, then the resulting system again satisfies the growth rate condition of order $N$. 

\begin{theorem}[\textbf{Polynomial perturbations of real analytic systems}]\label{thm:main_result}
Let $(\mathcal{X}=\{X_0,\ldots, X_m\},\mathfrak{C})$ and
$(\mathcal{Y}=\{Y_0,\ldots,Y_m\}, \mathfrak{C})$ be two real analytic
control-affine systems on $\R^n$ satisfying
  Assumption~\ref{asump} and let $x_0\in \R^n$. Suppose that the following conditions hold:
\begin{enumerate}
\item[(i)] $x_0\in \mathbb{R}^n$ is an equilibrium point for control-affine systems $\mathcal{X}$ and $\mathcal{Y}$,
\item[(ii)] the control-affine system $\mathcal{X}$ satisfies the growth rate condition of order
  $N$ at the point $x_0$, and
\item[(iii)] for every $i\in \{0,\ldots,m\}$, the vector fields $X_i$
  and $Y_i$ have the same Taylor polynomial of order $N$ around $x_0$.
\end{enumerate} 
Then $\mathcal{Y}$ satisfies the growth rate condition of order $N$ at
the point $x_0$. 
\end{theorem}
\begin{proof}
Since $(\mathcal{X},\mathfrak{C})$ satisfies the growth rate condition of order $N$
at the point $x_0$, there
exist $T,C>0$ such that, for every $t\in [0,T]$, we have 
\begin{equation*}
\overline{\mathrm{B}}(x_0,Ct^N)\in \mathrm{R}_{\mathcal{X}}(\le t,x_0).
\end{equation*}
For every $\mathbf{v}\in \mathbb{S}^n$, consider the
parametrized family of points $\gamma_{\mathbf{v}}:[0,T]\to \R^n$ defined by
\begin{equation*}
\gamma_{\mathbf{v}}(t)=x_0+\tfrac{C}{2}t^N\mathbf{v}.
\end{equation*}
It is clear that, for every $t\in [0,T]$, we have
$\gamma_{\mathbf{v}}(t)\in
\overline{\mathrm{B}}(x_0,\tfrac{C}{2}t^N)$. For every
$t\in [0,T]$, recall the definition of the multi-valued map
$F^t_{\mathcal{X},\mathcal{Y}}$ in Section~\ref{sec:perturbations} and let
$f^t_{\mathcal{X},\mathcal{Y}}:\overline{\mathrm{B}}(x_0,\tfrac{C}{2}t^N)\to
\mathrm{R}_{\mathcal{Y}}(\le t,x_0)$ be a single-valued selection of the multi-valued mapping $F^t_{\mathcal{X},\mathcal{Y}}$. Then, for every
$\mathbf{v}\in \mathbb{S}^{n-1}$, we define the curves $\mu_{\mathbf{v}}:[0,T]\to \R^n$ by 
\begin{equation*}
\mu_{\mathbf{v}}(t)=f^t_{\mathcal{X},\mathcal{Y}}(\gamma_{\mathbf{v}}(t)).
\end{equation*}
Note that, for every $t\in [0,T]$, we have $\gamma_{\mathbf{v}}(t)\in
\overline{\mathrm{B}}(x_0,\tfrac{C}{2}t^N)$. Therefore, for every $t\in [0,T]$, the map $\mu_{\mathbf{v}}(t)$ is
well-defined and $\mu_{\mathbf{v}}(t)\in \mathrm{R}_{\mathcal{Y}}(\le t,x_0)$. 

Now, by Theorem~\ref{lem:multi-valued_map} part~\ref{p2:reg_time}, there
exist $\alpha>0$ and $0<T_{\min}<T$ such that, for every $t\le
T_{\min}$ and every $x\in \overline{\mathrm{B}}(x_0,\tfrac{C}{2}t^N)$,
we get
\begin{align*}
\|f^t_{\mathcal{X},\mathcal{Y}}(\gamma_{\mathbf{v}}(t))-\gamma_{\mathbf{v}}(t)\|\le \alpha t^{N+1}.
\end{align*} 
This implies that, for every $t\in [0,T_{\min}]$, 
\begin{align*}
\mu_{\mathbf{v}}(t) = f^t_{\mathcal{X},\mathcal{Y}}(\gamma_{\mathbf{v}}(t))=\gamma_{\mathbf{v}}(t)+\mathcal{O}(t^{N+1})
  = x_0+\tfrac{C}{2}t^N\mathbf{v}+\mathcal{O}(t^{N+1}).
\end{align*} 
Thus, using Definition~\ref{def:control_variations}, $\mathbf{v}$ is a
control variation of $N$th order for the control-affine system
$(\mathcal{Y},\mathfrak{C})$. Thus, we have $\widehat{\mathcal{K}}^N_{\mathcal{Y}}=\bigcup_{\alpha\ge 0}
\alpha\mathcal{K}^N_{\mathcal{Y}}(x_0)=\R^n$ and, by Theorem~\ref{lem:characterization_growth_rate}, the control-affine system
$(\mathcal{Y},\mathfrak{C})$ satisfies the growth rate condition of order $N$.
\end{proof}

As a direct consequence of Theorem~\ref{thm:main_result}, one can
prove the following connection between Conjecture~\ref{conj:2} and Conjecture~\ref{conj:1}. 
\begin{corollary}\label{thm:connection_conjectures}
If Conjecture~\ref{conj:1} is true, then Conjecture~\ref{conj:2} holds.
\end{corollary}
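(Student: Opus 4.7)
The plan is to show that Corollary~\ref{thm:connection_conjectures} is an essentially immediate consequence of Theorem~\ref{thm:main_result} combined with the observation, already noted after Definition~\ref{def:reachable}, that the growth rate condition of order $N$ implies small-time local controllability.

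First, I would assume Conjecture~\ref{conj:1} and start with an arbitrary real analytic control system $\mathcal{X} = \{X_0, X_1, \ldots, X_m\}$ on $\R^n$ which is small-time locally controllable from $x_0$; this is the hypothesis of Conjecture~\ref{conj:2}. Applying Conjecture~\ref{conj:1} to $\mathcal{X}$ produces an integer $N \in \N$ and constants $T, C > 0$ such that
\begin{equation*}
\overline{\mathrm{B}}(x_0, Ct^N) \subseteq \mathrm{R}_{\mathcal{X}}(<t, x_0), \qquad \forall t \le T.
\end{equation*}
In the terminology of Definition~\ref{def:reachable}, this is precisely the growth rate condition of order $N$ at $x_0$ for $\mathcal{X}$.

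Next, I would take this $N$ as the integer whose existence is asserted in Conjecture~\ref{conj:2}. Let $\mathcal{Y} = \{Y_0, Y_1, \ldots, Y_m\}$ be any real analytic control system whose vector fields satisfy
\begin{equation*}
D^{\mathbf{r}} X_i(x_0) = D^{\mathbf{r}} Y_i(x_0), \qquad \forall i \in \{0,1,\ldots,m\}, \ \forall \mathbf{r} \in \Z_{\ge 0}^n \text{ with } |\mathbf{r}| \le N,
\end{equation*}
that is, $\mathcal{X}$ and $\mathcal{Y}$ have $N$th contact at $x_0$. By Theorem~\ref{thm:main_result}, $\mathcal{Y}$ satisfies the growth rate condition of order $N$ at $x_0$, so there exist $T', C' > 0$ with $\overline{\mathrm{B}}(x_0, C' t^N) \subseteq \mathrm{R}_{\mathcal{Y}}(<t, x_0)$ for all $t \le T'$. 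In particular $x_0 \in \mathrm{int}(\mathrm{R}_{\mathcal{Y}}(<t, x_0))$ for all $t > 0$, which means $\mathcal{Y}$ is small-time locally controllable from $x_0$.

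Since there is no real obstacle — the entire analytical content has already been packaged inside Theorem~\ref{thm:main_result} — I expect the proof to be only a few lines. The only minor bookkeeping point is to make sure the $N$ extracted from Conjecture~\ref{conj:1} is the same one fed into the hypothesis of Conjecture~\ref{conj:2}; no uniformity in $\mathcal{Y}$ beyond $N$th contact is required because Theorem~\ref{thm:main_result} already provides fresh constants $\alpha, \overline{T}$ for each $\mathcal{Y}$.
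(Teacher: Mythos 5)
Your proposal is correct and coincides with the paper's intended argument: the paper states the corollary as a direct consequence of Theorem~\ref{thm:main_result} without further proof, and your chain (Conjecture~\ref{conj:1} gives the growth rate condition of order $N$ for $\mathcal{X}$, $N$th contact plus Theorem~\ref{thm:main_result} transfers it to $\mathcal{Y}$, and the growth rate condition implies small-time local controllability) is exactly that deduction. Your closing remark about which quantities must be uniform is a correct and worthwhile clarification.
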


\section{Conclusion}

In this paper, we studied small-time local controllability of real
analytic control-affine systems under polynomial perturbations of their vector
fields. For a real analytic control-affine system which satisfies the
growth rate condition of order $N$, we construct a suitable
multi-valued map for studying the perturbations of its reachable
sets. We showed that if a real analytic control-affine system
satisfies the growth rate condition of order $N$, then any
perturbation of the system by polynomial vector fields of order higher
than $N$ is again small-time local controllable. For the future research,
it is interesting to study suitable filterations of the vector fields of
the system (see for example~\cite{HH:91}) to sharpen the perturbation results obtained in
this paper. Moreover, it is worth mentioning that while our techniques heavily depend on the real
analyticity of the vector fields of the system, it remains to be seen
whether the main result of this paper still holds for $C^{\infty}$ control-affine systems.

\section*{Acknowledgments} The author would like to thank Professor
Andrew Lewis and the anonymous reviewers for reading this manuscript
carefully and for their constructive suggestions which improved the quality and exposition of the paper.

\appendix
\section{Proof of Theorem~\ref{cor:estimate_picard_iterations}}\label{app:1}

In this appendix, we present a proof of the Theorem~\ref{cor:estimate_picard_iterations} using the
$\mathrm{C}^{\omega}$-topology on the space
$\mathrm{LC}^{\omega}(\R^n)$. As mentioned in Section~\ref{sec:functions_vector_fields}, it is
also possible to give a proof of this theorem using the estimates in~\cite[\S 2.4.4]{agrachevcontrol2004} for
piecewise constant real analytic vector fields as curves on the space
$\mathrm{\Gamma}^{\infty}(\R^n)$ (see~\cite[\S 2.4.4 and Appendix A.2]{agrachevcontrol2004}). Note that, in~\cite[\S
2.4.4]{agrachevcontrol2004}, the space $\Gamma^{\infty}(\R^n)$ is equipped with the Whitney compact-open
topology~\cite[\S 2.2]{agrachevcontrol2004}. 
\begin{proof}
By Theorem~\ref{th:3}, for every $i\in \{1,2,\ldots,n\}$, every compact
set $K\subset \R^n$ containing $x_0$, and every $\mathbf{a}\in
c^{\downarrow}_{0}$, there exist $M, M_{x^i}>0$ such that
\begin{equation}\label{eq:estimate_picards}
\rho^{\omega}_{K,\mathbf{a},x^i}(\widehat{\exp}_{k}(t X)-\widehat{\exp}_{k-1}(t X))\le (Mt)^{k+1}M_{x^i}.
\end{equation} 
Note that, for every $\phi\in \mathrm{LC}^{\omega}(\R^n)$ and every
$i\in \{1,2,\ldots,n\}$, we have $\mathrm{ev}_{x_0}\scirc
\phi(x^i)=\phi(x^i)(x_0)$. Therefore, we get
\begin{equation*}
\|\mathrm{ev}_{x_0}\scirc\phi(x^i)\|\le \sup\{\phi(x^i)(y)\mid y\in
K\}\le \rho^{\omega}_{K,\mathbf{a},x^i}(\phi).
\end{equation*}
Now, by setting $\max_i\{M_{x^i}\}=L$ and using the estimate~\eqref{eq:estimate_picards}, we get 
\begin{equation*}
\|\mathrm{ev}_{x_0}\scirc\widehat{\exp}_{k+1}(t
X)(x^i)-\mathrm{ev}_{x_0}\scirc\widehat{\exp}_{k}(t X)(x^i)\|\le 
(Mt)^{k+1}L,\qquad\forall i\in\{1,2,\ldots,n\}.
\end{equation*}
Therefore, if we choose $\overline{T}\le T_{B}$ such that
$M\overline{T}<1$, we have 
\begin{align*}
\|\mathrm{ev}&_{x_0}\scirc\widehat{\exp}(t
X)(x^i)-\mathrm{ev}_{x_0}\scirc\widehat{\exp}_{k}(t X)(x^i)\|\\ &\le \sum_{r=k}^{\infty}\|\mathrm{ev}_{x_0}\scirc\widehat{\exp}_{r+1}(t
X)(x^i)-\mathrm{ev}_{x_0}\scirc\widehat{\exp}_{r}(t X)(x^i)\| \\ & \le
                                                                   \sum_{r=k}^{\infty}(Mt)^{r+1}L=\frac{(Mt)^{k+1}}{1-Mt}L,\qquad\forall
                                                                   t\in [0,\overline{T}].
\end{align*}
This completes the proof of the theorem.
\end{proof}

\bibliographystyle{plainurl}
\bibliography{Ref.bib}
\end{document}